\theoremstyle{plain}
\newtheorem{theorem}{Theorem}[section]
\newtheorem{thm}[theorem]{Theorem}
\newtheorem{lemma}[theorem]{Lemma}
\newtheorem{prop}[theorem]{Proposition}
\newtheorem{remark}[theorem]{Remark}
\theoremstyle{definition}
\newtheorem{defn}[theorem]{Definition}
\newtheorem{exam}[theorem]{Example}
\begin{document}
\title{Cohomology and deformations of oriented dialgebras}
\author{Ali N. A. Koam}
\email{akoum@jazanu.edu.sa}
\address{Department of Mathematics, Faculty of Sciences,
Jazan University, Saudi Arabia.}

\author{Ripan Saha}
\email{ripanjumaths@gmail.com}
\address{Department of Mathematics,
Raiganj University, Raiganj, 733134,
West Bengal, India.}

\subjclass[2010]{16E30, 16E40, 18G55.}
\keywords{Oriented dialgebras, Oriented dialgebra cohomology, One-parameter deformations, Singular extensions, Involutions.}
\begin{abstract}
We introduce a notion of oriented dialgebra and develop a cohomology theory for oriented dialgebras based on the possibility to mix the standard chain complexes computing group cohomology and associative dialgebra cohomology. We also introduce a $1$-parameter formal deformation theory for oriented dialgebras and show that cohomology of oriented dialgebras controls such deformations.
\end{abstract}
\maketitle
\section{Introduction}
The notion of dialgebras was introduced by J.-L. Loday \cite{loday} while studying the periodicity phenomena in algebraic $K$-theory. The associative dialgebras or simply dialgebras are generalizations of associative algebras, whose structure is determined by two associative operations interwined by some relations. A (co)homology theory of dialgebras using the planar binary trees has been introduced by Loday and dialgebra (co)homology with coefficients was introduced by A. Frabetti \cite{Frab}.

In a series of papers, the deformations of associative algebras was developed by Gerstenhaber \cite{G1, G2, G3, G4, G5}. Later following his theory deformations of other algebraic structures have been extensively studied in many research articles, see \cite{NR, NR2, GS} and also references there in. For every one-parameter formal deformations there is a controlling cohomology. For example, the Hochschild cohomology controls deformations of associative algebras, Chevalley-Eilenberg cohomology for Lie algebras, and Harrison cohomology for commutative algebras.

The notion of an oriented algebra was introduced by Koam and Pirashvili  \cite{KP18} with aim to develop the equivariant version of Hochschild
cohomology. Oriented algebras generalize both $G$-algebras and involutive associative algebras. The authors developed a cohomology theory of oriented algebras and showed how extensions and deformations are related to the cohomology.

Algebra with involution plays an important role in the study of algebras arising in the geometric contexts \cite{braun, costello, LV}. In the present paper, we introduce a notion of oriented dialgebra, which is to oriented algebra as a dialgebra is to an associative algebra. Oriented dialgebra is more general than $G$-algebra and involutive dialgebra. Let $\textbf{ODias}$ be the category oriented dialgebras. We discuss about the free object in the category $\textbf{ODias}$. Similar to the non-oriented case, we establish a commutative diagram of functors between categories in the oriented setting.
We mix the standard chain complexes of group cohomology \cite{quillen} and associative dialgebra cohomology \cite{anita gm} to develop a cohomology theory for oriented dialgebra. In the construction of oriented dialgebra cohomology we use mirror images of planar binary trees. We show how cohomology of oriented dialgebras classifies the singular extensions of oriented dialgebras. We also introduce a one-parameter formal deformation for oriented dialgebra which deforms both the multilications and the orientation and show how our cohomology controls the deformation of the oriented dialgebra.

The paper is organized as folllows: In Section \ref{sec 2}, we discuss some preliminaries of dialgebras and its cohomology. We also discuss some results on the plane mirror image of planar binary trees which will be used in the construction of oriented dialgebra cohomology.  In Section \ref{sec 3}, we introduce the notion of oriented dialgebra and give some examples. In Section \ref{sec 4}, we discuss different type of algebras in the oriented setting and how they are related to each other. In Section \ref{sec 5}, we develop the cohomology for the oriented dialgebras. In Section \ref{sec 6}, we study the singular extensions of oriented dialgebras and obtain a standard result about the relationship between extensions and cohomology. In the last Section, we introduce a one-parameter formal deformations of oriented dialgebra and study how the cohomology defined in Section \ref{sec 5} is related to the deformation.

\section{Preliminaries} \label{sec 2}
In this section, we discuss some basics of cohomology of dialgebras which will be used later in this paper. Some references can be found in \cite{anita gm, anita gm2, loday, saha19}. Throughout the paper $\mathbb{K}$ always denotes a field of characteristics zero.

\begin{defn}\label{dialgebra}
A dialgebra over the field $\mathbb{K}$, in short, is a vector space $D$ over $\mathbb{K}$ together with two $\mathbb{K}$-linear associative maps $\dashv,\vdash ~: D\otimes D\to D$, called a left and a right product, satisfying the following axioms:
\begin{align}
\begin{cases}
&(x\dashv y)\dashv z=x\dashv(y\vdash z),\\
&(x\vdash y)\dashv z=x\vdash (y\dashv z),\\
&(x\dashv y)\vdash z=(x\vdash y)\vdash z.
\end{cases}
\end{align}
Associativity of $\dashv$ and $\vdash$ is same as the following axioms:
\begin{align}
\begin{cases}
&(x\dashv y)\dashv z=x\dashv(y\dashv z),\\
&(x\vdash y)\vdash z=x\vdash(y\vdash z).
\end{cases}
\end{align}
\end{defn}
Any associative algebra $(A,\mu)$ has a dialgebra structures by defining a left and a right multiplcation as $x\dashv y=\mu(x,y)= x\vdash y$.

By a morphism of dialgebras $D$ and $D'$, we mean a $\mathbb{K}$-linear map $f:D\to D'$ which preserves both the left and the right multiplications, that is,
\begin{align*}
&f(x\dashv y)=f(x)\dashv f(y),\\
&f(x\vdash y)=f(x)\vdash f(y)\,\,\, \text{for all} \,\,\,x,y\in D.
\end{align*}

\subsection{Cohomology of dialgebras with coefficients}
We discuss cohomology of dialgebras with coefficients in a representation following \cite{anita gm}. Construction of cochain complex uses the concept of a planar binary $n$-tree. So first we recall some basic facts about planar binary $n$-tree. A planar binary $n$-tree is a tree with one root together with $n$ trivalent vertices and $(n+1)$ leaves. The set of all planar binary $n$-trees is denoted by  $Y_n$ and the cardinality of $Y_n$ is the Catalan number $c_n=\frac{(2n)!}{n!(n+1)!}$. For any $y\in Y_n$, we label all the $n+1$ leaves by $\lbrace 0,1,\ldots,n\rbrace$ from the left to the right. The following is the list of all $0,1, 2$ and $3$ trees.
\begin{center}
\begin{tikzpicture}[scale=0.2]
 \draw (-6,0) -- (-4,-2);	\draw (0,0) -- (2,-2);    \draw (6,0) -- (8,-2);  \draw (12,0)-- (14,-2); \draw (14,-2) -- (14,-4); \draw (14, -2) -- (16,0); \draw (12.7, - 0.7) -- (13.3333, 0) ; \draw (13.33, -1.33) -- (14.66, 0);
\draw (-4,-2)-- (-2,0);	\draw (2,-2) -- (4,0);    \draw (8,-2) -- (10,0); \draw (18,0) -- (20,-2); \draw (20, -2) -- (20, -4); \draw (20, -2) -- (22,0); \draw (19.33, -1.33) -- (20.66, 0); \draw (19.33, 0) -- (20, -0.66); \draw (24,0)-- (26,-2); \draw (26,-2)-- (26,-4); \draw (26,-2) -- (28,0); \draw (25.33, 0) -- (24.66, -0.66); \draw (26.66, 0) -- (27.34, -0.66); \draw (30,0) -- (32,-2); \draw (32, -2) -- (32, -4); \draw (32,-2) -- (34, 0); \draw (31.33, 0) -- (32.67 , -1.33) ; \draw (32.66, 0) -- (32, -0.66);
\draw (-4,-2) -- (-4,-4);	\draw (2,-2) -- (2,-4);    \draw (8,-2) -- (8,-4); \draw (36,0) -- (38, -2) ; \draw (38, -2) -- (38, -4) ; \draw (38, -2) -- (40, 0); \draw (37.33, 0) -- ( 38.67, - 1.33) ; \draw (38.66, 0) -- (39.34, -0.66);
\draw (-8,0) -- (-8,-4);	\draw (1,-1) -- (2,0);     \draw (9,-1) -- ( 8,0);
\end{tikzpicture}
.
\end{center}

The grafting of a $p$-tree $y_1$ and a $q$-tree $y_2$ is a $(p+q+1)$-tree and is denoted by $y_1\vee y_2$, which is obtained by joining the roots of $y_1$ and $y_2$ and creating a new root from that vertex.
There is a convention to denote this tree as $[y_1\,\,\, p+q+1\,\,\, y_2]$ in which all the zero elements are ignored except in $Y_0$. With this notation we can write \begin{align*}
&Y_0=\lbrace[0]\rbrace,\\
&Y_1=\lbrace[1]\rbrace,\\
&Y_2=\lbrace [1\,2],[2\,1]\rbrace,\\
&Y_3=\lbrace [1\,2\,3],[2\,1\,3],[1\,3\,1],[3\,1\,2],[3\,2\,1]\rbrace. 
\end{align*}
We will use this notation to write the elements of $Y_n$.

For each $0\leq i\leq n$, there are face and degeneracy maps on the set of $n$-trees $Y_n$ which satisfy almost all the axioms of a simplicial set but $Y_n$ is not a simplicial set with those face and degeneracy maps. For each $0 \leq i \leq n$, face maps are defined by $d_i:Y_n\to Y_{n-1},\,\,\,y\mapsto d_iy$, where $d_iy$ is the $(n-1)$-tree obtained by removing $i^{\text{th}}$ leaf from $y$. For each $0\leq i\leq n$, degeneracy maps are defined by $s_i:Y_n\to Y_{n+1},\,\,\,y\mapsto s_iy$, where $s_iy$ is a $(n+1)$-tree obtained by bifurcating $i^{\text{th}}$ leaf to $y$. The face and degeneracy maps satisfies all the relations of simplicial set except $s_is_i=s_{i+1}s_i$. 

A representation of a dialgebra $D$ is a $\mathbb{K}$-module $M$ endowed with two left actions $\dashv,\vdash: D\otimes M\to M$ and two right actions  $\dashv,\vdash: M\otimes D\to M$  satisfying the axioms of dialgebras, whenever one of the entries $x,y,z\in M$, and the others two are in $D$. One can easily check that $D$ is a representation of itself where left and right actions are induced from the left multiplication and right multiplication of the dialgebra $D$.

Let $\mathbb{K}[Y_n]$ be the $\mathbb{K}$-vector space spanned by the set of $n$-trees $Y_n$. We denote $CY^n(D,M)=\text{Hom}(\mathbb{K}[Y_n]\otimes D^{\otimes n},M                                                                                                                                                                                                                                                                                                                                                                                          )$ for the module of $n$-cochains of $D$ with coefficients in $M$. We define $\delta_i: CY^n(D,M)\to CY^{n+1}(D,M)$ as follows-
$$\label{delta_i} \delta_if(y;a_1,\ldots,a_{n+1})=\begin{cases}
                                                      a_1 \circ^y_0f(d_0y;a_2,\ldots,a_{n+1}),\,\,\,\text{if}\,\,\,i=0,\\
                                                      f(d_iy;a_1,\ldots,a_i \circ^y_ia_{i+1},\ldots,a_{n+1}),\,\,\,\text{if}\,\,\,1\leq i\leq n,\\
                                                      f(d_{n+1}y;a_1,\ldots,a_n) \circ^y_{n+1}a_{n+1},\,\,\,\text{if}\,\,\,i=n+1.
                                                      
                                                      \end{cases} $$
 For each $0\leq i\leq {n+1},\,\,  d_i: Y_{n+1}\to Y_n$ is a face map and for $1\leq i \leq n$, $\circ^y_i:Y_{n+1}\to \lbrace \dashv,\vdash\rbrace$ is defined by
 $$\circ^{[j_1,\ldots,j_{n+1}]}_i=\begin{cases}
                                               \dashv,\,\,\,\text{if}\,\,\, j_i>j_{i+1},\\
                                               \vdash,\,\,\,\text{if}\,\,\,j_i<j_{i+1}.                                  
                                               \end{cases}$$
Thus, for $1\leq i \leq n$, the image of $y\in Y_{n+1}$ under $\circ_i$ is $\circ^y_i=~\dashv$ or $\vdash$ if the $i^{\text{th}}$ leaf points from the vertex is oriented like `$\backslash $' or oriented like `$/$' respectively.

For $i=0, n+1$, $\circ^y_0$ and $\circ^y_{n+1}$ is defined as follows:                                               
$$\circ^{[j_1,\ldots,j_{n+1}]}_0=\begin{cases}
                                               \dashv,\,\,\,\text{if}\,\,\, [j_1,\ldots,j_{n+1}]~\text{is of the form},~[0]\vee y_1~\text{for some}~ n\text{-tree}~ y_1.\\
                                               \vdash,\,\,\,\text{otherwise},                                  
                                               \end{cases},$$ 
                                               and
$$\circ^{[j_1,\ldots,j_{n+1}]}_{n+1}=\begin{cases}
                                               \vdash,\,\,\,\text{if}\,\,\, [j_1,\ldots,j_{n+1}]~\text{is of the form},~y_1\vee [0]~\text{for some}~ n\text{-tree}~ y_1.\\
                                               \dashv,\,\,\,\text{otherwise}.                                  
                                               \end{cases}$$                                                
The coboundary map $\delta^n:  CY^n(D,M)\to CY^{n+1}(D,M)$ is defined as $\delta^n=\sum^{n+1}_{i=0}(-1)^i\delta_i$.
Note that $\delta\circ \delta=0$ due to the  fact that face maps $d_i$ satisfies the relation $d_id_j=d_{j-1}d_i$, see \cite{loday}. The cohomology of the dialgebra $D$ with coefficients in a representation $M$ is defined as
$$HY^n(D,M):=H^n(CY^*(D,M)).$$
In this paper, we consider cohomology of dialgebras $D$ with coefficients over itself. Thus,
$$HY^n(D,D):=H^n(CY^*(D,D)).$$

\subsection{Mirror reflections of planar binary trees} \label{subsec mir}
Let $y$ be a $n$-tree then plane mirror reflection of $y$, denoted by $y^\ast$, is a $n$-tree such that $i^{\text{th}}$ leaf of $y^\ast$ corresponds to $(n-i)^{\text{th}}$ leaf of $y$ and if $i^{\text{th}}$ leaf of $y^\ast$ is oriented like `$\backslash $' then $(n-i)^{\text{th}}$ leaf of $y$ is oriented like `$/$'. For example if we take 
$$y = \begin{tikzpicture}[scale=0.2] \draw (0,0) -- (0, -2); \draw (2,2) -- (0,0) -- (-2,2); \draw (-1,1) -- (0,2); \end{tikzpicture}, ~~~\text{then}~~~y^\ast= \begin{tikzpicture}[scale=0.2] \draw (0,0) -- (0, -2); \draw (2,2) -- (0,0) -- (-2,2);  \draw (1,1) -- (0,2); \end{tikzpicture}.$$
\begin{remark}
It is easy to verify that if  $y= [a_1,\, \ldots, \, a_n]$ then $y^\ast = [a_n,\, \ldots, \, a_1]$. For example if $y = [1\, 2]$ then $y^\ast =[2\, 1]$, if $y = [1\, 2 \, 3]$ then $y^\ast = [3\, 2 \, 1]$. Note that $(y^\ast)^\ast =y$. 
\end{remark}

\begin{lemma}
If $y = y_1 \vee y_2$ then $(y_1 \vee y_2)^\ast = y^\ast_2 \vee y^\ast_1$.
\end{lemma}
\begin{proof}
We can write $y_1 \vee y_2$ as a tree as follows:
\begin{center}
$y_1 \vee y_2 =$
\begin{tikzpicture}[scale=0.2]
 \node[label=]  at (-6, 0) {$y_1$};
 \node[label=]  at (-2, 0) {$y_2$};
 \draw (-6,0) -- (-4,-2);
  \draw (-4,-2)-- (-2,0);	   
\draw (-4,-2) -- (-4,-4);	  
\end{tikzpicture}
\end{center}
On the other hand, taking mirror image of the above tree, we have
\begin{center}
$(y_1 \vee y_2)^\ast$= 
\begin{tikzpicture}[scale=0.2]
 \node[label=]  at (-6, 0) {$y^\ast_2$};
 \node[label=]  at (-2, 0) {$y^\ast_1$};
 \draw (-6,0) -- (-4,-2);
  \draw (-4,-2)-- (-2,0);	   
\draw (-4,-2) -- (-4,-4);	 
\end{tikzpicture}
$= y^\ast_2 \vee y^\ast_1.$
\end{center}
This proves our result.
\end{proof}

\begin{prop} \label{mir prop}
Let $y$ be a planar binary $(n+1)$-tree. For all $0 \leq i \leq n+1$, the following hold:
\begin{enumerate}
\item $(d_iy)\ast = d_{n-i+1}(y^\ast)$.
\item If $\circ^{y}_i =~ \dashv$ then $\circ^{y^\ast}_{n-i+1} = ~\vdash$,  and if $\circ^{y}_i =~ \vdash$ then $\circ^{y^\ast}_{n-i+1} =~ \dashv$.
\end{enumerate}
\end{prop}

\begin{proof}
(1) Let $y$ be a planar $(n+1)$-tree. As the $i^{\text{th}}$ leaf of $y$ corresponds to the $(n-i +1)^{\text{th}}$ leaf of $y^\ast$. Therefore, removing the $i^{\text{th}}$ leaf from $y$ results in removing $(n-i +1)^{\text{th}}$ leaf from $y^\ast$. Thus, our desired result follows.

(2) We only show the first part of the result and proof of the second part is similar to the first part.  For $i = 0$, if $\circ^y_i = ~\dashv$ then $y = | \vee y_1$ for some $n$-tree $y_1$. This implies $y^\ast = y^\ast_1 \vee |$. Therefore, $\circ^{y^\ast}_{n+1} =~ \vdash$. Thus, our claim is true for $i = 0$. Similar calculation shows the result is true for $i = n+1$.

For $1\leq i \leq n$, if $o^y_i = ~\dashv$ then the $i^{\text{th}}$ leaf is oriented like `$\backslash $' . Therefore, the $(n-i +1)^{\text{th}}$ leaf of $y^\ast$ is oriented like `$/$' as $1\leq i \leq n$. Thus, $\circ^{y^\ast}_{n-i+1} =~ \vdash.$
\end{proof}
 
\section{Oriented dialgebras} \label{sec 3}
In this section, we introduce a notion of oriented dialgebras. We give some examples of such algebras. At the end of the section, we discuss how oriented dialgebras are related with other oriented algebras.
\begin{defn}
An involutive dialgebra is a dialgebra $(D, \dashv, \vdash)$ together with a $\mathbb{K}$-linear map $t : D \to D$ satisfying
\begin{enumerate}
\item $t^2 = id_D$;
\item $t(x \dashv y) = t(y) \vdash t(x)$ for all $x, y \in D$.
\end{enumerate}
We may denote an invoultive dialgebra as $(D, t)$.
\end{defn}

\begin{remark}
We call the $\mathbb{K}$-linear map $t$ by an involution on the dialgebra $D$. Observe that
$$t (x \vdash y) = t\big(t^2(x)) \vdash t^2(y) \big) = t^2 (t(y) \dashv y (x)) = t(y) \dashv t(x).$$
Thus, in an involutive dialgebra $(D, t)$ the equality $t (x \vdash y) = t(y) \dashv t(x)$ for all $x, y \in D$ is also hold.
\end{remark}

\begin{defn}
An orientation is a pair $(G,\varepsilon)$, where G is a group and $\varepsilon : G\longrightarrow \lbrace \pm 1 \rbrace$ is a group homomorphism. 
If such orientation is fixed, then we say that $G$ is an oriented group.
\end{defn}
\begin{exam}\hfill 
\begin{enumerate}
\item Any group $G$ can be equipped with a trivial orientation: $\varepsilon(g) = 1$ for all $g\in G$.
\item  For more interesting examples, we could take 
\begin{enumerate}
\item $G = \lbrace \pm 1 \rbrace$ and $\varepsilon = id$.
\item More generally, we can take $G = S_n$ and $\varepsilon(\sigma) = sgn(\sigma)$.
 \end{enumerate}
\end{enumerate}
\end{exam}
\begin{defn}
Let $G$ be a group and $\varepsilon : G \to \lbrace \pm 1 \rbrace$ be a group homomorphism. An oriented dialgebra is a dialgebra $(D, \dashv, \vdash)$ together with an action of $G$ on $D$,
$$G \times D \to D,~~~(g, x)\mapsto gx,$$
such that under this action $D$ is a $G$-module and
\begin{align} \label{orientaion eq1}
g(x \dashv y)=\begin{cases}
            & gx\dashv gy~~~\text{if}~ \varepsilon (g)= +1,\\
            & gy \vdash gx~~~\text{if}~ \varepsilon (g)= -1.
           \end{cases}
\end{align}
\begin{align} \label{orientation eq2}
g(x \vdash y)=\begin{cases}
            & gx\vdash gy~~~\text{if}~ \varepsilon (g)= +1,\\
            & gy \dashv gx~~~\text{if}~ \varepsilon (g)= -1.
           \end{cases}
\end{align}
\end{defn}
If $(D, \dashv, \vdash)$ is an oriented dialgebra equipped with an action of a group $G$ on $D$ with orientation map $\varepsilon$, then we say $D$ is an oriented dialgebra over an oriented group $(G, \varepsilon)$.

\begin{remark}
If $\varepsilon (g) = -1$ then $g (x_1 \circ^{y^\ast}_i x_2) = (gx_1) \circ^y_{n-i+1} (gx_2)$ for $x_1, x_2 \in D, g \in G$. This simply follows from the second part of the Proposition \ref{mir prop}.

If we take $G = \lbrace \pm 1 \rbrace$ and $\varepsilon = id$ then an oriented dialgbra is nothing but an involutive dialgebra. Thus, an oriented dialgebra is more general than an involutive dialgebra.
\end{remark}

\begin{defn}
Let $D_1$ and $D_2$ be oriented dialgebras over an oriented group $(G, \varepsilon)$. A morphism between oriented dialgebras $D_1$ and $D_2$ is a morphism of $G$-modules $f : D_1 \to D_2$ satisfying
\begin{align*}
& f(x \dashv y)= f(x) \dashv f(y),\\
& f(x \vdash y)= f(x) \vdash f(y).
\end{align*}
\end{defn}

The set of oriented dialgebras together with the set of morphisms between them form a category. We denote this category as $\textbf{ODias}$.

\begin{exam}
Any oriented algebra $(A, \mu)$ is an oriented dialgebra by taking $x \dashv y= \mu(x, y)= x \vdash y.$
\end{exam}

\begin{exam}
Suppose $(A, d)$ be an oriented differential associative algebra over an oriented group $(G, \varepsilon)$, that is, $A$ is an oriented associative algebra together with a differential which is compatible with orientation. We define the left and right multiplication on $A$ as follows-
$$x\dashv y:=x\,dy\,\,\,\text{and}\,\,\,x\vdash y:=dx\,y.$$
One can easily verify that with these two operations $(A, d)$ has an oriented dialgebra structure over $(G, \varepsilon)$.
\end{exam}

\begin{exam}
Suppose $A$ be an oriented algebra over an oriented group $(G, \varepsilon)$ and $M$ be an oriented $A$-bimodule. Let $f: M \to A$ be an oriented $A$-bimodule map. Then $M$ can be given an oriented dialgebra structure as follows:
\begin{align*}
& x \dashv y:= xf(y),\\
& x \vdash y:= f(x)y.
\end{align*}
\end{exam}

Let $G$ be an oriented group. Suppose $V$ is a $\mathbb{K}$-linear vector space equipped with an action of $G$.  The tensor module on $V$
$$T(V) = \mathbb{K} \oplus V \oplus V^{\otimes 2} \oplus \cdots \oplus V^{\otimes n} \oplus \cdots$$
with the concatenation product is an oriented associative algebra \cite{DS20}. Consider the following $\mathbb{K}$-module
$$\text{Dias}(V) = T(V) \otimes V \otimes T(V).$$
We write an element $v = v_{-n}\ldots \otimes v_{-1} \otimes v_0\otimes v_1\otimes \ldots v_m$ of $\text{Dias(V)}$ as $v =  v_{-n}\ldots v_{-1} \tilde{v_0} v_1 \ldots v_m$. Let $v, w \in \text{Dias(V)}$ such that
$$v= v_{-n}\ldots v_{-1} ~\tilde{v_0}~ v_1 \ldots v_m,~~~w= w_{-p}\ldots w_{-1}~ \tilde{w_0}~ w_1 \ldots w_q.$$
Then $\text{Dias(V)}$ together with the operations
\begin{align*}
& v \dashv w = v_{-n}\ldots v_{-1}~ \tilde{v_0}~ v_1 \ldots v_m w_{-p}\ldots  \ldots w_q ,\\
& v \vdash w = v_{-n}\ldots  \ldots v_m w_{-p}\ldots w_{-1}~\tilde{w_0}~w_1  \ldots w_q 
\end{align*}
 is a free dialgebra \cite{loday}. Define an action of $G$ on $\text{Dias}(V)$ by
 \begin{align*}
 g (v_{-n}\ldots v_{-1} ~\tilde{v_0}~ v_1 \ldots v_m) = \begin{cases}
                                                                                      & g v_{-n}\ldots g v_{-1} ~g\tilde{v_0}~ gv_1 \ldots gv_m ~~~\text{if}~~~ \varepsilon(g) = +1,\\
                                                        & gv_m \ldots gv_1  ~g\tilde{v_0}~ gv_{-1}\ldots gv_{-n} ~~~\text{if}~~~ \varepsilon(g) = -1.                            
                                                                                         \end{cases}
 \end{align*}
 Then $(\text{Dias(V)}, \dashv, \vdash)$ equipped with this action is a free oriented dialgebra.

\begin{remark}
In \cite{saha19}, the author defined a notion of a finite group action on a dialgebra. Here, we first recall the definition of  group action on a dialgebra $D$.
A group $G$ is said to act on $D$ from the left if there is a map,
\begin{align*}
\phi:~& G\times D\to D,~~ (g, x) \mapsto \phi (g, x) = gx
\setcounter{equation}{0}
\end{align*} 
satisfying the axioms:
\begin{enumerate}
\item $ex=x\,\,\, \text{for all}\,\,\, x\in D\,\,\, \text{and e is the identity of the group G}.$
\item $g_1(g_2x)=(g_1g_2)x\,\,\, \text{for all}\,\,\,x\in D\,\,\, \text{and}\,\,\, g_1,g_2\in G.$
\item $\text{For each}\,\,\, g\in G,\,\,\, \text{the map}\,\,\, \psi_g=\phi(g,-):D\to D\,\,\, \text{is a linear map}.$
\item $gx\dashv gy=g(x\dashv y) \,\,\,\text{and}\,\,\, gx\vdash gy=g(x\vdash y)\,\,\, \text{for all}\,\,\, x,y\in D, g\in G.$ 
\end{enumerate}

Let $G$ be an oriented group with $\varepsilon (g)=1$ for all $g \in G $. Then $G$ acts on $D$ via action defined above. Thus, in this case oriented dialgebras are nothing but equivariant dialgebras. Therefore, the notion of an oriented dialgebra generalizes the notion of a group action on a dialgebra defined in \cite{saha19}. 
\end{remark}

\section{Relation with other type of oriented algebras}\label{sec 4}
In this section we discuss different type of algebras in the oriented setting  and motivated from the Loday's \cite{loday} work on dialgebras we show some results about how they are related to each other.

In \cite{DS20}, the authors introduced the category of oriented dendriform algebras $\textbf{ODend}$ and the category of oriented Lie algebras $\textbf{OLie}$ and also constructed free object in the category of oriented associative algebras $\textbf{OAss}$. Moreover, the authors discussed how $\textbf{OAss}$ is related to $\textbf{OLie}$ and $\textbf{ODend}$.

\begin{prop} \cite{DS20}
The functor $U : OLie \to OAss$ is left adjoint to the functor $(~~)_c : OAss \to OLie$.
\end{prop}

\begin{prop}
Tensor product of an oriented dialgebra and an oriented dendriform algebra with the bracket
$$[x\otimes a, y \otimes b] = (x \dashv y) \otimes (a\prec b) - (y\vdash x) \otimes (b \succ a) - (y \dashv x) \otimes (b \prec a) + (x \vdash y) \otimes (a \succ b)$$
defines a structure of an oriented Lie algebra.
\end{prop}
\begin{proof}
Tensor product of a dialgebra and a dendriform algebra with the given bracket has a structure of a Lie algebra \cite{loday}. We need to only check that the given bracket preserve the action of the orientation group $(G, \varepsilon)$ on the tensor product. We assume $G$ acts on the tensor product component wise.
For $\varepsilon (g) = +1$, observe that
\begin{align*}
& g [x \otimes a, y\otimes b] \\
& = g \big((x \dashv y) \otimes (a\prec b) - (y\vdash x) \otimes (b \succ a) - (y \dashv x) \otimes (b \prec a) + (x \vdash y) \otimes (a \succ b) \big) \\
& = (gx \dashv gy) \otimes (ga\prec gb) - (gy\vdash gx) \otimes (gb \succ ga) - (gy \dashv gx) \otimes (gb \prec ga) + (gx \vdash gy) \otimes (ga \succ gb) \\
&= [gx \otimes ga, gy \otimes gb].
\end{align*}
Next for $\varepsilon (g) = -1$, we have
\begin{align*}
& g [x \otimes a, y\otimes b] \\
& = g \big((x \dashv y) \otimes (a\prec b) - (y\vdash x) \otimes (b \succ a) - (y \dashv x) \otimes (b \prec a) + (x \vdash y) \otimes (a \succ b) \big) \\
& = (gy \vdash gx)\otimes (gb \succ ga) - (gx \dashv gy) \otimes (ga \prec gb) - (gx \vdash gy) \otimes (ga \succ gb) + (gy \dashv gx) \otimes (gb \prec ga) \\
& = [gy\otimes gb, gx \otimes ga].
\end{align*}
Thus, the tensor product of an oriented dialgebra and an oriented dedriform algebra is an oriented Lie algebra.
\end{proof}

Recall, a Leibniz algebra is a vector space $L$ over $\mathbb{K},$ equipped with a bracket operation, which is $\mathbb{K}$-bilinear and satisfies the Leibniz identity: 
$$[x,[y,z]]= [[x,y],z]-[[x,z],y] ~~\mbox{for}~x,~y,~z \in L.$$

A dual Leibniz algebra or a Zinbiel algebra is a $\mathbb{K}$-vector space $R$ equipped with a bilinear map
$$.~: R \times R \rightarrow R$$ satisfying the relation
\begin{align}\label{zinbiel-relation}
((r.s).t) = (r.(s.t)) + (r.(t.s)),  \forall~r, s, t \in R.
\end{align} 

\begin{defn}
An oriented Leibniz algebra over an oriented group $(G, \varepsilon)$ consists of a Leibniz algebra $(L, [~~,~~])$ together with an action of $G$,
$$G\times L \to L,~~~(g, x)\mapsto gx,$$
satisfying
\begin{align*}
g [x ,y] = \begin{cases}
                 &~[gx, gy],~~~\text{if}~~~\varepsilon(g) = +1,\\
                 &- [gx, gy],~~~\text{if}~~~\varepsilon(g) = -1.
                \end{cases}
\end{align*}
\end{defn}
Note that any oriented Lie algebra is automatically an oriented Leibniz algebra.

One can define morphism between oriented Leibniz algebras in a similar way as for oriented dialgebras. Let $\textbf{OLeib}$ denotes the category whose objects are oriented Leibniz algebras and morphisms are oriented Leibniz algebra morphisms.

The following proposition is an oriented version of the well known result connecting dialgebras and Leibniz algebras.
\begin{prop} \label{di Leib}
Let $D$ be an oriented dialgebra.  Then $D$ together with the bracket  $[x, y]_D = x \dashv y - y \vdash x$ is an oriented Leibniz algebra.
\end{prop}
\begin{proof}
For any dialgebra $D$ the bracket $[x, y]_D = x \dashv y - y \vdash x$ makes $D$ a Leibniz algebra \cite{loday}. We only need to show that this bracket preserves the action of the oriented group $G$.
\begin{align*}
g [x ,y]_D = \begin{cases}
                 &g ( x \dashv y - y \vdash x) = gx \dashv gy - gy \vdash gx =[gx, gy]_D,~~~\text{if}~~~\varepsilon(g) = +1,\\
                 &g ( x \dashv y - y \vdash x) = gy \vdash gx - gx \dashv gy = -[gx, gy]_D,~~~\text{if}~~~\varepsilon(g) = -1.
                \end{cases}
\end{align*}
\end{proof}
We denote the oriented Leibniz algebra $(D, [~~,~~]_D)$ as $D_{OLeib}$. From the Proposition (\ref{di Leib}) we have a functor
$$[~~,~~]_D : \textbf{ODias} \longrightarrow \textbf{OLeib}.$$

Let $L$ be an oriented Leibniz algebra. From the previous discussion
$$\text{Dias} (L) = T(L)\otimes L \otimes T(L)$$
has an oriented dialgebra structure. Consider
$$Ud(L) = \text{Dias} (L)/\lbrace [x, y] - x\dashv y + y \vdash x \mid x, y \in L \rbrace.$$
The orientation of $L$ and $\text{Dias} (L)$ induces an orientation on $Ud(L)$ as 
\begin{align*}
g ([x, y] - x\dashv y + y \vdash x)= \begin{cases}
                                                            & ~[gx, gy] - gx\dashv gy + gy \vdash gx,~~~\text{if}~~~\varepsilon(g) = +1 \\
                                                            & -[gx, gy] - gy\vdash gx + gx \dashv gy,~~~\text{if}~~~\varepsilon(g) = -1 .
                                                           \end{cases}
\end{align*}
Therefore, $Ud(L)$ is an oriented dialgebra.
This construction gives us a functor:
$$\text{Ud} :  \textbf{OLeib} \to \textbf{ODias}.$$

\begin{prop}
The functor $\text{Ud} :  \textbf{OLeib} \to \textbf{ODias}$ is left adjoint to the functor $[~~,~~]_D : \textbf{ODias} \longrightarrow \textbf{OLeib}.$ Thus, we have
$$\text{Hom}_\textbf{ODias} (Ud(L), D) \cong \text{Hom}_{OLeib}(L, D_{OLeib}).$$  
\end{prop} 

\begin{proof}
Let $f : L \to D_{OLeib}$ be a morphism of oriented Leibniz algebras. Then $f$ extends uniquely to a morphism of oriented dialgebras from $\text{Dias}(L)$ to $D$. As the image of $[x, y] - x\dashv y + y \vdash x$ is zero under this morphism, we have a morphism from $Ud(L)$ to $D$.

Conversely, for any oriented morphism $g : Ud(L) \to D$, the restriction of $g$ to $L$ gives us a morphism from $L$ to $D_{OLeib}$. 

\end{proof}

\begin{defn}
An oriented Zinbiel algebra over an oriented group $(G, \varepsilon)$ consists of a Zinbiel algebra $(R, .)$ together with an action of $G$,
$$G\times R \to R,~~~(g, x)\mapsto gx,$$
satisfying
$$ g (x.y) =gx. gy~~~\text{for all}~~~\varepsilon(g).$$
We denote the category of all oriented Zinbiel algebras by $\textbf{OZinb}$.
\end{defn}

\begin{lemma}
Let $(R,.)$ be an oriented Zinbiel algebra over the oriented group $(G, \varepsilon)$. Then $R$ together with the multiplication
$$x \prec y := x.y,~~~x \succ y := y.x,~~~\text{for all}~~~x, y \in R,$$
defines an oriented dendriform algebra.
\end{lemma}
\begin{proof}
It is an established result that $(R, \prec, \succ)$ is a dendriform algebra and it is easy to check that the defined multiplications preserve the action. Thus, $(R, \prec, \succ)$ is an oriented dendriform algebra
\end{proof}

\begin{defn}
An oriented commutative algebra over an oriented group $(G, \varepsilon)$ consists of a commutative algebra $\big(A, (~~)\big)$ together with an action of $G$,
$$G\times A \to A,~~~(g, x)\mapsto gx,$$
satisfying
$$ g (xy) =(gx gy)~~~\text{for all}~~~\varepsilon(g).$$
\end{defn}
The set of all oriented commutative algebra together with the natural morphism between them forms a category, we call this category as $\textbf{OComm}$.

\begin{lemma}
Let $(R,.)$ be an oriented Zinbiel algebra over the oriented group $(G, \varepsilon)$. Then the symmetric product
$$(xy) = x.y + y.x,$$
is an oriented commutative and associative algebra.
\end{lemma}

From the above discussion, it is clear that similar to the classical case we have the following diagram in the oriented case.

$$
\xymatrixrowsep{.08in}
\xymatrixcolsep{.08in}
\xymatrix{
&&{\bf ODend}&&&&{\bf ODias}&\cr
&&&&&&&\cr
&\nearrow&&\searrow&&\nearrow&&\searrow\cr
&&&&&&&\cr
{\bf OZinb}&&&&{\bf OAss}&&&&{\bf OLeib}\cr
&&&&&&&\cr
&\searrow&&\nearrow&&\searrow&&\nearrow\cr
&&&&&&&\cr
&&{\bf OComm}&&&&{\bf OLie}&\cr
}
$$

\section{Cohomology of oriented dialgebras}\label{sec 5}
In this section, we develop a cohomology theory for oriented dialgebras using chain complexes computing associative dialgebra cohomologies \cite{anita gm, anita gm2} and group cohomology. we use plane mirror image of binary tress to construct the cochain complex for oriented dialgebra cohomology.
\begin{defn}
Let $D$ be an oriented dialgebra over an oriented group $(G,\varepsilon)$. An $oriented$ $bimodule$ over $D$ is an usual bimodule $M$ together with a $G$-module structure on $M$ such that
\begin{align*}
& g (x \dashv m) =\begin{cases}
                           &gx \dashv gm, ~~~~\text{if}~~~\varepsilon(g) = +1,\\
                          & gm \vdash gx,~~~~\text{if}~~~\varepsilon(g) = -1,
                           \end{cases}                           
                          \hspace{0.5in} g (x \vdash m) =\begin{cases}
                           &gx \vdash gm, ~~~~\text{if}~~~\varepsilon(g) = +1, \\
                          & gm \dashv gx,~~~~\text{if}~~~\varepsilon(g) = -1,
                           \end{cases} \\
&g (m \dashv x) =\begin{cases}
                           &gm \dashv gx, ~~~~\text{if}~~~\varepsilon(g) = +1,\\
                          & gx \vdash gm,~~~~\text{if}~~~\varepsilon(g) = -1,
                           \end{cases}                           
                          \hspace{0.5in} g (m \vdash x) =\begin{cases}
                           &gm \vdash gx, ~~~~\text{if}~~~\varepsilon(g) = +1, \\
                          & gx \dashv gm,~~~~\text{if}~~~\varepsilon(g) = -1.
                           \end{cases}                           
\end{align*}

\end{defn}
Let D be an oriented dialgebra over an oriented group $(G,\varepsilon)$ and let $M$ be an oriented bimodule. For any $n\geqslant 0$, we define an action of $G$ on $CY^n (D, M)= \text{Hom}(\mathbb{K}[Y_n]\otimes D^{\otimes n},M                                                                                                                                                                                                                                                                                                                                                                                          )$ by
\begin{equation}\label{action on cochain}
 (gf)(y;x_1,\ldots,x_n) = \begin{cases}
gf(y;g^{-1} x_1,\ldots,g^{-1} x_n), &  {\rm if} \  \varepsilon(g) = +1,\\
 (-1)^{\frac{(n-1)(n-2)}{2}}\; gf(y^\ast; g^{-1} x_n,\ldots, g^{-1} x_1), & {\rm if} \ \varepsilon(g) = -1, \end{cases}\end{equation}
where $x_1,\ldots, x_n \in D,~y \in Y_n$ and $y^\ast$ denotes the mirror refection of the $n$-tree $y$. In particular, for $n=1$ the action is independent on the parity of $\varepsilon(g)$. We denote $CY^n (D, M)$ equipped with an action of $G$ by $C^n (D, M).$
\begin{remark}
It is easy to see that any oriented dialgebra $D$ is an oriented bimodule over itself. In this paper, we shall consider $D$ as a bimodule over itself.
\end{remark}
\begin{prop}
The coboundary map $\delta : CY^n(D, D) \to CY^{n+1}(D, D)$ defined in \ref{delta_i} satisfies $\delta (g f)= g \delta (f)$, that is , $\delta$ is an equivariant map. Thus, $\delta$ induces a map  $\delta : C^n(D, D) \to C^{n+1}(D, D)$.
\end{prop}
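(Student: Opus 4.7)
The plan is to verify $\delta(gf)=g\delta(f)$ by checking the equivariance of $\delta$ term by term, after splitting into cases based on $\varepsilon(g)=\pm 1$. The first case is essentially routine, while the second case requires combinatorial symmetries of the planar trees and careful sign bookkeeping.

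First, for $\varepsilon(g)=+1$, the action reduces to $(gf)(y;x_1,\ldots,x_n)=gf(y;g^{-1}x_1,\ldots,g^{-1}x_n)$, and $g$ is a dialgebra automorphism on $D$: $g(x\circ y)=gx\circ gy$ for $\circ\in\{\dashv,\vdash\}$. I would substitute the action into each of the three branches defining $\delta_i$ (the cases $i=0$, $1\le i\le n$, and $i=n+1$), push the $g^{-1}$'s through using the automorphism property (and in the boundary cases also using that $g$ commutes with the left/right multiplications on the bimodule $D$), and recognize the result as $g\delta_i(f)$. Summing over $i$ against $(-1)^i$ yields $\delta(gf)=g\delta(f)$ in this case.

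Second, for $\varepsilon(g)=-1$, the action reverses the module arguments and carries the sign $(-1)^{(n-1)(n-2)/2}$, and $g$ acts as an anti-morphism: $g(x\circ y)=gy\circ gx$. My plan is to establish, for each $i\in\{0,\ldots,n+1\}$, a term-wise identity of the shape
$$\delta_i(gf)(y;b_1,\ldots,b_{n+1}) = (-1)^{n(n-1)/2}\, g\cdot\delta_{n+1-i}(f)(y;g^{-1}b_{n+1},\ldots,g^{-1}b_1),$$
matching $\delta_i(gf)$ to $\delta_{n+1-i}(f)$ evaluated at the reversed argument string. The sign bookkeeping then gives $(-1)^{n(n-1)/2}/(-1)^{(n-1)(n-2)/2}=(-1)^{n-1}$, and re-indexing $i\mapsto n+1-i$ inside $\sum_i(-1)^i$ contributes a factor $(-1)^{n+1}$, so the two combine to $(-1)^{2n}=1$, producing exactly the factor $(-1)^{(n-1)(n-2)/2}$ attached to $g\delta(f)$.

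The main obstacle lies in this second case, specifically in the planar-tree combinatorics under argument reversal: one must check that the data $d_iy$ and $o_i^y$ appearing on the left transform into $d_{n+1-i}y$ and $o_{n+1-i}^y$ on the right after passing through the anti-morphism. I expect to need the mirror involution $y\mapsto\bar y$ on $Y_{n+1}$ together with the identities $d_i\bar y=\overline{d_{n-i}y}$ and an appropriate interchange rule $o_i^{\bar y}\leftrightarrow o_{n+2-i}^y$, plus the fact that $g$ as an anti-morphism exchanges the roles of the two operands while preserving the label $\dashv$ or $\vdash$. A separate inspection of the endpoint cases $i=0$ and $i=n+1$ is then needed, since $o_i^y$ there is defined by whether $y$ decomposes as $[0]\vee y_1$ or $y_1\vee[0]$, and these two shapes are interchanged under mirroring. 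Once these combinatorial identities are recorded, the term-by-term matching and the sign computation above assemble into the desired equivariance.
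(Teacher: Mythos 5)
Your handling of $\varepsilon(g)=+1$ is the same routine substitution the paper uses, and your sign bookkeeping in the $\varepsilon(g)=-1$ case is correct; the skeleton (expand $\delta_i(gf)$, match against $\delta_{n+1-i}(f)$ at reversed arguments, re-index the alternating sum) is also exactly the paper's. The genuine gap is the combinatorial step you defer to the end, and with the definitions actually in force it cannot be filled. The action \eqref{action on cochain} leaves the tree coordinate $y$ untouched and only reverses the module arguments, so the mirrored tree $\bar y$ never appears on either side of $\delta(gf)=g\delta(f)$. Matching terms by their argument pattern (which is forced, since for a general cochain $f$ the various terms are independent), your term-wise identity requires
\begin{equation*}
d_iy=d_{\,n+1-i}\,y \qquad\text{and}\qquad o^y_i=o^y_{\,n+1-i}\qquad \text{for all } y\in Y_{n+1},\ 0\le i\le n+1,
\end{equation*}
with the \emph{same} tree $y$ on both sides. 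These identities are false once $n\ge 2$: for $y=[2\,1\,3]\in Y_3$ one has $o^y_1=\dashv$ (since $j_1=2>j_2=1$) but $o^y_2=\vdash$ (since $j_2=1<j_3=3$), and moreover $d_0y=[1\,2]$ while $d_3y=[2\,1]$. Consequently the matched terms involve different operations and even different trees inside $f$, so the claimed equivariance itself fails at such $y$ for a generic cochain $f$; no amount of bookkeeping will rescue the term-wise matching.

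The mirror identities you plan to invoke are true in the form $d_i\bar y=\overline{d_{\,n+1-i}\,y}$ and ``$o^{\bar y}_i$ equals the \emph{opposite} operation to $o^y_{\,n+1-i}$,'' but they cannot enter this proof, since nothing in the action or in the coboundary ever produces $\bar y$; and they point the wrong way even if one redefined the action to mirror the tree, because mirroring flips the label $\dashv\leftrightarrow\vdash$ while, as you yourself note, the paper's orientation-reversing $g$ swaps operands but preserves the label, so the two flips do not cancel. (A consistent framework needs both changes at once: the $\varepsilon(g)=-1$ action on cochains should send $y\mapsto\bar y$, and an orientation-reversing $g$ should satisfy $g(x\dashv y)=gy\vdash gx$ and $g(x\vdash y)=gy\dashv gx$.) For comparison, the paper's own proof has exactly the same hole: in its $\varepsilon(g)=-1$ computation it expands both sides, re-indexes the sum by $j=n+1-i$, and declares the results equal without ever comparing $d_iy,\,o^y_i$ with $d_{n+1-i}y,\,o^y_{n+1-i}$. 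So you have correctly isolated the crux that the published argument glosses over, but your plan, like the paper's, founders on it, and as stated the identities needed to complete it are false.
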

\begin{proof}
We first prove the proposition for the case of $\varepsilon(g) = +1$. For $x_1,x_2,\ldots, x_{n+1} \in D$ and $g\in G$, we have
\begin{align*}
& \delta (gf)(y;x_1, \ldots, x_{n+1})\\
&= x_1 \circ^y_0 (gf) (d_0y;x_2,\ldots,x_{n+1})\\
&+ \sum^n_{i=1}(-1)^i(gf)(d_i y; x_1,\ldots,x_i \circ^y_i x_{i+1}, \ldots, x_{n+1})\\
&+(-1)^{n+1} (gf)(d_{n+1}y;x_1,\ldots,x_n)\circ^y_{n+1}x_{n+1}. \\           
&= x_1 \circ^y_0 g (f(d_0 y;g^{-1} x_2,\ldots, g^{-1} x_{n+1}))\\
&+ \sum^n_{i=1} (-1)^i g (f(d_i y; g^{-1}x_1,\ldots, g^{-1 }x_i \circ^y_i g^{-1} x_{i+1},\ldots, g^{-1} x_{n+1}))\\
& +(-1)^{n+1}g (f(d_{n+1}y;g^{-1} x_1,\ldots, g^{-1} x_n)) \circ^y_{n+1} x_{n+1}.
\end{align*}
We also have
\begin{align*}
& (g\delta (f))(y;x_1, \ldots, x_{n+1})\\
&=g (\delta (f))(y; g^{-1 }x_1, \ldots,g^{-1} x_{n+1})\\
&=g (g^{-1 }x_1 \circ^y_0 f (d_0 y; g^{-1} x_2,\ldots,g^{-1} x_{n+1}))\\
& + \sum^n_{i=1} (-1)^i g (f(d_i y; g^{-1} x_1,\ldots,g^{-1} x_i \circ^y_i g^{-1}x_{i+1},\ldots,g^{-1} x_{n+1}))\\
&+(-1)^{n+1}g(f (d_{n+1}y; g^{-1} x_1,\ldots,g^{-1 }x_n) \circ^y_{n+1} g^{-1} x_{n+1}).                                                     
\\
&= x_1 \circ^y_0 g f (d_0 y; g^{-1} x_2,\ldots,g^{-1} x_{n+1}))\\
& +  \sum^n_{i=1} (-1)^i g (f(d_i y; g^{-1} x_1,\ldots,g^{-1} x_i \circ^y_i g^{-1}x_{i+1},\ldots,g^{-1} x_{n+1}))\\
&+(-1)^{n+1} g f (d_{n+1}y; g^{-1} x_1,\ldots,g^{-1 }x_n)\circ^y_{n+1} x_{n+1}.                                                     
\end{align*}
Thus, we have $\delta (g f)= g \delta (f)$ for $\varepsilon (g) = +1$. Next we need to deal with $\varepsilon(g) = -1$. Observe that
\begin{align*}
& \delta (gf)(y;x_1, \ldots, x_{n+1})\\
&= x_1 \circ^y_0 (gf) (d_0y;x_2,\ldots,x_{n+1})\\
&+ \sum^n_{i =1} (-1)^i (gf)(d_i y; x_1,\ldots,x_i \circ^y_i x_{i+1}, \ldots, x_{n+1})\\
&+(-1)^{n+1} (gf)(d_{n+1}y;x_1,\ldots,x_n) \circ^y_{n+1}x_{n+1}. \\           
&= (-1)^{\frac{(n-1)(n-2)}{2}}x_1 \circ^y_0 g (f((d_0 y)^\ast;g^{-1} x_{n+1},\ldots, g^{-1} x_{2}))\\
&(-1)^{\frac{(n-1)(n-2)}{2}}  \sum^n_{i=1}(-1)^{i} g (f( (d_i y)^\ast; g^{-1}x_{n+1},\ldots, g^{-1 }x_{i+1} \circ^{y^\ast}_{n-i+1} g^{-1} x_i,\ldots, g^{-1} x_1))\\
& (-1)^{\frac{(n-1)(n-2)}{2}+ (n+1)}g (f( (d_{n+1}y)^\ast;g^{-1} x_n,\ldots, g^{-1} x_1)) \circ^y_{n+1} x_{n+1}.
\end{align*}
On the other hand, we have
\begin{align*}
& (g\delta (f))(y;x_1, \ldots, x_{n+1})\\
&=(-1)^{\frac{n(n-1)}{2}}g (\delta (f))(y^\ast; g^{-1 }x_{n+1}, \ldots,g^{-1} x_{1})\\
&=(-1)^{\frac{n(n-1)}{2}} g (g^{-1 }x_{n+1} \circ^{y^\ast} _0 f (d_0 y^\ast; g^{-1} x_n,\ldots,g^{-1} x_{1}))\\
& + (-1)^{\frac{n(n-1)}{2}}\sum^n_{i=1}(-1)^{i}  g (f(d_{i} y^\ast; g^{-1} x_{n+1},\ldots,g^{-1} x_{n-i+2} \circ^{y^\ast}_{i} g^{-1}x_{n-i+1},\ldots,g^{-1} x_{1}))\\
&+(-1)^{\frac{n(n-1)}{2}+(n+1)}g(f (d_{n+1}y^\ast; g^{-1} x_{n+1},\ldots,g^{-1 }x_2)\circ^{y^\ast}_{n+1} g^{-1} x_{1}).                                                     
\\
&=(-1)^{\frac{n(n-1)}{2}}  g (f (d_0 y^\ast; g^{-1} x_n,\ldots,g^{-1} x_{1})) \circ^{y}_{n+1} x_{n+1}\\
& + (-1)^{\frac{n(n-1)}{2}}\sum^n_{j=1}(-1)^{n-j+1}  g (f(d_{n-j+1} y^\ast; g^{-1} x_{n+1},\ldots,g^{-1} x_{j+1} \circ^{y^\ast}_{n-j+1} g^{-1}x_{j},\ldots,g^{-1} x_{1}))\\
&+(-1)^{\frac{n(n-1)}{2}+(n+1)} x_{1} \circ^{y}_{0} g(f (d_{n+1}y^\ast; g^{-1} x_{n+1},\ldots,g^{-1 }x_2)).
\end{align*}
This proves our proposition.
\end{proof}

 Thus, one can form the following bicomplex ${C}{^\ast_G}(D, D)$.
\begin{displaymath}
\xymatrix{
   \vdots               & \vdots                    & \vdots                    & \\
\mathcal{M}aps(G^2, D) \ar[u]^{\partial^{''}} \ar[r]^-{\partial^{'}} & \mathcal{M}aps(G^2,\text{Hom}(\mathbb{K}[Y_1]\otimes D, D)) \ar[r]^-{\partial^{'}} \ar[u]^{\partial^{''}}  & \mathcal{M}aps(G^2,\text{Hom}(\mathbb{K}[Y_2]\otimes D^{\otimes 2},D)) \ar[u]^-{\partial^{''}} \ar[r]^-{\partial^{'}} & \cdots \\ 
 \mathcal{M}aps(G,D) \ar[u]^-{\partial^{''}} \ar[r]^-{\partial^{'}} & \mathcal{M}aps(G,\text{Hom}(\mathbb{K}[Y_1]\otimes D,D)) \ar[r]^-{\partial^{'}} \ar[u]^-{\partial^{''}} & \mathcal{M}aps(G,\text{Hom}(\mathbb{K}[Y_2]\otimes D^{\otimes 2},D)) \ar[u]^-{\partial^{''}} \ar[r]^-{\partial^{'}} & \cdots \\
 D \ar[u]^-{\partial^{''}} \ar[r]^-{\partial^{'}} & \text{Hom}(\mathbb{K}[Y_1]\otimes D, D) \ar[r]^-{\partial^{'}} \ar[u]^-{\partial^{''}} &\text{Hom}(\mathbb{K}[Y_2]\otimes D^{\otimes 2},D) \ar[u]^-{\partial^{''}} \ar[r]^-{\partial^{'}} & \cdots  \\
}\end{displaymath}
The coboundary maps are given as follows:
\begin{itemize}
\item The coboundary of every horizontal maps $\partial^{'}$ are given by:
\begin{equation*}
\begin{split}
(\partial^{'}\alpha)(g_1,\ldots,g_n; y; x_1,\ldots,x_{n+1})& = x_1 \circ^y_0 \alpha(g_1,\ldots,g_n; d_0 y; x_2,\ldots,x_{n+1})\\& + \sum^n_{ i=1} (-1)^i \alpha(g_1,\ldots,g_n; d_i y; x_1,\cdots,x_i \circ^y_i x_{i+1},\ldots, x_{n+1})\\& + (-1)^{n+1} \alpha(g_1,\ldots,g_n; d_{n+1}y; x_1,\ldots,x_n) \circ^y_{n+1}x_{n+1}.
\end{split}
\end{equation*}
\item The coboundary of the first vertical maps are given by:
\begin{equation*}
\begin{split}
(\partial^{''}f)(g_1,\ldots,g_{n+1})& = g_1 f(g_2,\ldots,g_{n+1})\\& + \sum_{i=1}^n (-1)^i f(g_1,\ldots,g_ig_{i+1},\ldots,g_{n+1})\\& + (-1)^{n+1} f(g_1,\ldots,g_n).
\end{split}
\end{equation*}
\item The coboundary of the second vertical maps when $\varepsilon(g) = \pm 1$ are given by:
\begin{equation*}
\begin{split}
(\partial^{''}\beta)(g_1,\ldots,g_{n+1};y;x)& = {{{g_1}}\beta}(g_2,\ldots,g_{n+1};y;{g_{1}^{-1}{x}})\\& + \sum_{i=1}^n (-1)^i \beta(g_1,\ldots,g_ig_{i+1},\ldots,g_{n+1};y;x)\\& + (-1)^{n+1} \beta(g_1,\ldots,g_n;y;x).
\end{split}
\end{equation*}
\item The coboundary of the third vertical maps when $\varepsilon(g) = +1$ are given by:
\begin{equation*}
\begin{split}
(\partial^{''}\gamma)(g_1,\ldots,g_{n+1};y;x_1,x_2)& = {{{g_1}}\gamma}(g_2,\ldots,g_{n+1};y;{g_{1}^{-1}{x_1}},{g_{1}^{-1}{x_2}})\\& + \sum_{i=1}^n (-1)^i \gamma(g_1,\ldots,g_ig_{i+1},\ldots,g_{n+1};y;x_1,x_2)\\& + (-1)^{n+1} \gamma(g_1,\ldots,g_n;y;x_1,x_2).
\end{split}
\end{equation*}
\item The coboundary of the third vertical maps when $\varepsilon(g) = -1$ is given by:
\begin{equation*}
\begin{split}
(\partial^{''}\gamma)(g_1,\ldots,g_{n+1};y;x_1,x_2)& = {{{g_1}}\gamma}(g_2,\ldots,g_{n+1};y^\ast;{g_{1}^{-1}{x_2}},{g_{1}^{-1}{x_1}})\\& + \sum_{i=1}^n (-1)^i \gamma(g_1,\ldots,g_ig_{i+1},\ldots,g_{n+1};y;x_1,x_2)\\& + (-1)^{n+1} \gamma(g_1,\ldots,g_n;y;x_1,x_2).
\end{split}
\end{equation*}
\end{itemize}
We will also need the following double complex $\tilde{C}{^\ast_G}(D,D)$ which is obtained by deleting the first column and reindexing. 

\begin{displaymath}
\xymatrix{
   \vdots               & \vdots                    & \vdots                    & \\
 \mathcal{M}aps(G^2, \text{Hom}(\mathbb{K}[Y_1]\otimes D, D)) \ar[r]^-{\partial^{'}} \ar[u]^{\partial^{''}}  & \mathcal{M}aps(G^2, \text{Hom}(\mathbb{K}[Y_2]\otimes D^{\otimes 2},D)) \ar[u]^-{\partial^{''}} \ar[r]^-{\partial^{'}} & \cdots \\ 
  \mathcal{M}aps(G, \text{Hom}(\mathbb{K}[Y_1]\otimes D,D)) \ar[r]^-{\partial^{'}} \ar[u]^-{\partial^{''}} & \mathcal{M}aps(G, \text{Hom}(\mathbb{K}[Y_2]\otimes D^{\otimes 2},D)) \ar[u]^-{\partial^{''}} \ar[r]^-{\partial^{'}} & \cdots \\
 \text{Hom}(\mathbb{K}[Y_1]\otimes D, D) \ar[r]^-{\partial^{'}} \ar[u]^-{\partial^{''}} & \text{Hom}(\mathbb{K}[Y_2]\otimes D^{\otimes 2},D) \ar[u]^-{\partial^{''}} \ar[r]^-{\partial^{'}} & \cdots  \\
}\end{displaymath}
\begin{defn}
We define
$$\tilde{C}^n_G(D,D) = \bigoplus_{\substack{i+j= n+1\\ i \geq 0, j \geq 1}} \mathcal{M}aps(G^i, \text{Hom}(\mathbb{K}[Y_j]\otimes D^{\otimes j},D))~~~\text{for}~~~n\geq 0.$$
The coboundary map $\partial : \tilde{C}^n_G(D,D) \to \tilde{C}^{n+1}_G(D,D)$ is defined as
$$\partial (f) = \partial^{''}(f) + (-1)^i \partial^{'}(f)~~~\text{for}~~~ f \in \mathcal{M}aps(G^i, \text{Hom}(\mathbb{K}[Y_j]\otimes D^{\otimes j},D)).$$
The homologies of the total complex of $\tilde{C}{^\ast_G}(D,D)$ are denoted by $\tilde{H}{^n_G}(D,D)$ where $n\geqslant 0$, and called the cohomology of the oriented dialgebra.
\end{defn}
Observe that $\tilde{H}^1_G(D,D)= \tilde{Z}^1_G(D,D)/\tilde{B}^1_G(D,D)$, where $\tilde{Z}^1_G(D,D)$ is the collection of pairs $(\alpha,\beta)$. Here $\alpha\in  \mathcal{M}aps(G, \text{Hom}(\mathbb{K}[Y_1]\otimes D,D))$ and $\beta \in \text{Hom}(\mathbb{K}[Y_2]\otimes D^{\otimes 2},D)$. Since $\text{Hom}_\mathbb{K}(\mathbb{K}[Y_1]\otimes_\mathbb{K} D, D) \simeq \text{Hom}_\mathbb{K}(D,D)$, we may write $\alpha(g, x)$ instead of $\alpha(g; [1]; x)$. The pair $(\alpha, \beta)$ satisfy the following conditions:
\begin{align}\label{coho 1}
\alpha(gh, x) = {g\alpha(h, {{g^{-1}}{x}})} + \alpha(g, x),
\end{align}
\begin{align}\label{2-cocycle exp}
&x_1 \circ^y_0\alpha(g;d_0 y;x_2) - \alpha(g;d_1y;x_1 \circ^y_1 x_2) + \alpha(g;d_2y;x_1) \circ^y_2x_2 \\
&= \begin{cases} \nonumber
g\beta(y;{g^{-1}}x_1,{{g^{-1}}{}x_2)}-  \beta(y;x_1,x_2), &  {\rm if}\ \varepsilon(g) = +1,\\
g\beta(y^\ast;{g^{-1}}{}x_2,{g^{-1}}{}x_1) - \beta(y;x_1,x_2), & {\rm if} \  \varepsilon(g) = -1, \end{cases}
\end{align}
and
\[ \partial{'}(\beta)= \delta(\beta) = x_1 \circ^y_0\beta(d_0y;x_2,x_3) - \beta(d_1y;x_1\circ^y_1x_2,x_3) + \beta(d_2y;x_1,x_2 \circ^y_2x_3) - \beta(d_3y;x_1,x_2) \circ^y_3x_3 = 0.\]
Note that the last condition implies $\beta$ is a $2$-cocycle of the dialgebra cohomology defined in \cite{anita gm}.
Moreover, $(\alpha,\beta)\in \tilde{B}^1_G(D,D)$ if and only if there exists $\gamma \in \text{Hom}(\mathbb{K}[Y_1]\otimes D,D)$ such that 
\[\beta(y;x_1,x_2) = x_1 \circ^y_0\gamma(d_0y;x_2) - \gamma(d_1y;x_1 \circ^y_1x_2) + \gamma(d_2y;x_1) \circ^y_2x_2\]
and
\[\alpha(g,x) = {g\gamma({g^{-1}}{x})} - \gamma(x).\]

We denote $\beta([2~1]; x_1, x_2)=\beta^l (x_1, x_2)$ and $\beta([1~2]; x_1, x_2)=\beta^r (x_1, x_2)$. Now using this notation we can rewrite the Equation \ref{2-cocycle exp} as follows:
\begin{align}\label{2-cocycle exp 1}
&x_1\dashv \alpha(g, x_2) - \alpha(g, x_1 \dashv x_2) + \alpha(g, x_1)\dashv x_2 \\
&= \begin{cases} \nonumber
g \beta^l ({g^{-1}}x_1,{{g^{-1}}{}x_2)} - \beta^l (x_1,x_2), &  {\rm if}\ \varepsilon(g) = +1,\\
g\beta^r ({g^{-1}}{}x_2,{g^{-1}}{}x_1) - \beta^l (x_1,x_2), & {\rm if} \  \varepsilon(g) = -1, \end{cases}
\end{align}
and
\begin{align}\label{2-cocycle exp 2}
&x_1\vdash \alpha(g, x_2) - \alpha(g, x_1 \vdash x_2) + \alpha(g, x_1)\vdash x_2 \\
&= \begin{cases} \nonumber
g \beta^r ({g^{-1}}x_1,{{g^{-1}}{}x_2)} - \beta^r (x_1,x_2), &  {\rm if}\ \varepsilon(g) = +1,\\
g\beta^l ({g^{-1}}{}x_2,{g^{-1}}{}x_1) - \beta^r (x_1,x_2), & {\rm if} \  \varepsilon(g) = -1. \end{cases}
\end{align}
 For $\varepsilon (g)= +1$, we substitute $g^{-1}x_1=y_1,~g^{-1}x_2=y_2$ and for $\varepsilon (g)= -1$, we substitute $g^{-1}x_1=y_2,~g^{-1}x_2=y_1$ in Equations \ref{2-cocycle exp 1} and \ref{2-cocycle exp 2},  and rearranging first equation of (\ref{2-cocycle exp 1}) and second equation of (\ref{2-cocycle exp 2}) and similarly rest two equations, we get
\begin{align}\label{2-cocycle exp 3}
g\beta_1^l(y_1, y_2) 
=\begin{cases}
& \beta^l_1(g y_1, g y_2) + (\alpha_1(g, g y_1) \dashv g y_2) + (g y_1 \dashv \alpha_1 (g, g y_2)) - \alpha_1(g, g y_1 \dashv g y_2)~~~\text{if}~\varepsilon (g)=+1, \\
& \beta^r_1 (g y_2, g y_1) + (\alpha_1 (g, g y_2)\vdash g y_1) + (g y_2\vdash \alpha_1 (g, g y_1))- \alpha_1(g, g y_2 \vdash g y_1~~~\text{if}~\varepsilon (g)=-1,
\end{cases}    
\end{align}
and
\begin{align}\label{2-cocycle exp 4}
g\beta_1^r(y_1, y_2)  &=\begin{cases}
                                              & \beta^r_1(g y_1, g y_2) + (\alpha_1(g, g y_1) \vdash g y_2) + (g y_1 \dashv \alpha_1 (g, g y_2))- \alpha_1(g, g y_1 \vdash g y_2)~~~\text{if}~\varepsilon (g)=+1,\\
                                              & \beta^l_1 (g y_2, g y_1) + (\alpha_1 (g, g y_2)\dashv g y_1) + (g y_2\dashv \alpha_1 (g, g y_1)) - \alpha_1(g, g y_2 \dashv g y_1)~~~\text{if}~\varepsilon (g)=-1.
                                             \end{cases} 
\end{align}

\section{Classification of singular extensions of oriented dialgebras} \label{sec 6}
It is a well-known fact that the second Hochschild cohomology classifies the singular extensions of associative algebras. Here we obtain  a similar result for oriented dialgebras.
\begin{defn}
Let $D$ be an oriented dialgebra over an oriented group $(G,\varepsilon)$. Moreover, consider $D$ as an oriented bimodule over itself. A singular extension of $D$ by $D$ is a $\mathbb{K}$-split short exact sequence of $G$-modules 
 \[0 \to D\xrightarrow{i} B \xrightarrow{p} D \xrightarrow{}  0, \]
where $B$ is also an oriented dialgebra over  $(G,\varepsilon)$. Furthermore, $p$ and $i$ are morphisms of oriented dialgebras such that
 \[i(x_1) \dashv i(x_2)= i(x_1) \vdash i(x_2)= 0,\]
 \[i(x) \dashv b=i(x \dashv p(b)),\]
  \[i(x) \vdash b=i(x \vdash p(b)),\]
 \[b \dashv i(x)=i(p(b) \dashv x),\]
  \[b \vdash i(x)=i(p(b) \vdash x),\]
 for all $x,x_1,x_2\in D$, $b\in B$.
\end{defn}
\begin{thm}\label{gaf}
Let $D$ be an oriented dialgebra over an oriented group $(G,\varepsilon)$. Then there is a one-to-one correspondence between equivalence classes of extensions of $D$ by $D$ and $\tilde{H}^1_G(D,D)$.
 \end{thm}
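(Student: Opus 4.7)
The plan is to set up an explicit bijection between equivalence classes of singular extensions and the cohomology group $\tilde{H}^1_G(D,D)$, following the classical template used to classify singular extensions by $H^2$ (Hochschild) for associative algebras, but adapted to handle both the dialgebra structure (indexed by planar binary trees) and the group action with orientation.

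First I would go from extensions to cocycles. Given a singular extension $0\to D\xrightarrow{i}B\xrightarrow{p}D\to 0$, use the $\mathbb{K}$-splitting to pick a linear section $s:D\to B$ with $ps=\mathrm{id}_D$. Define
\[
\beta^l(x_1,x_2):=i^{-1}\bigl(s(x_1)\dashv s(x_2)-s(x_1\dashv x_2)\bigr),\qquad
\beta^r(x_1,x_2):=i^{-1}\bigl(s(x_1)\vdash s(x_2)-s(x_1\vdash x_2)\bigr),
\]
which makes sense because both differences lie in $\ker p = i(D)$; package these into $\beta\in\Hom(\mathbb{K}[Y_2]\otimes D^{\otimes 2},D)$ with $\beta([2\,1];-,-)=\beta^l$, $\beta([1\,2];-,-)=\beta^r$. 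Next measure the failure of equivariance of $s$ by
\[
\alpha(g,x):=i^{-1}\bigl(g\cdot s(g^{-1}x)-s(x)\bigr)\in D,
\]
viewed as an element of $\mathcal{M}aps(G,\Hom(\mathbb{K}[Y_1]\otimes D,D))$. The associativity axioms of the dialgebra $B$ force $\beta$ to satisfy the standard dialgebra $2$-cocycle identity (the third displayed equation after Definition~4.3), the cocycle identity for the $G$-action on $B$ forces the cochain identity~\eqref{coho 1} for $\alpha$, and the compatibility between the $G$-action on $B$ and the products $\dashv,\vdash$ of $B$ (with the sign flip for $\varepsilon(g)=-1$) produces exactly the mixed relations~\eqref{2-cocycle exp 1} and~\eqref{2-cocycle exp 2}. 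Thus $(\alpha,\beta)\in\tilde{Z}^1_G(D,D)$. A different choice of section $s'=s+i\circ\gamma$ for some $\gamma\in\Hom(\mathbb{K}[Y_1]\otimes D,D)$ changes $(\alpha,\beta)$ precisely by the coboundary formulas listed after equation~\eqref{2-cocycle exp}, so the class in $\tilde{H}^1_G(D,D)$ is well-defined. Equivalent extensions yield isomorphic triples and hence the same class.

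In the reverse direction, given a cocycle $(\alpha,\beta)$ I would define $B:=D\oplus D$ as a $\mathbb{K}$-module with
\[
(x_1,d_1)\dashv(x_2,d_2):=\bigl(x_1\dashv x_2,\; x_1\dashv d_2+d_1\dashv x_2+\beta^l(x_1,x_2)\bigr),
\]
and analogously for $\vdash$ using $\beta^r$, together with the $G$-action
\[
g\cdot(x,d):=\bigl(gx,\; gd+\alpha(g,x)\bigr).
\]
Then $i(d)=(0,d)$, $p(x,d)=x$, and the $s(x)=(x,0)$ is a $\mathbb{K}$-linear section. The $2$-cocycle equation for $\beta$ gives the five dialgebra axioms for $B$; relation~\eqref{coho 1} gives that the formula above defines a genuine action of $G$; the mixed cocycle conditions~\eqref{2-cocycle exp 3} and~\eqref{2-cocycle exp 4} (equivalently \eqref{2-cocycle exp 1},~\eqref{2-cocycle exp 2}) give exactly the orientation compatibility required in the definition of an oriented dialgebra, with the correct swap of arguments when $\varepsilon(g)=-1$. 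The singular-extension conditions $i(x_1)\dashv i(x_2)=0$, $i(x)\dashv b=i(x\dashv p(b))$, etc., follow directly from the construction since $i(D)=\{0\}\oplus D$ and products on the second summand are zero.

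Finally, I would verify that the two assignments are mutually inverse on equivalence classes: starting from an extension, producing $(\alpha,\beta)$, and then rebuilding $B'=D\oplus D$ with the above formulas yields an oriented-dialgebra isomorphism $B'\xrightarrow{\sim} B$, $(x,d)\mapsto s(x)+i(d)$; and starting from a cocycle and taking the section $s(x)=(x,0)$ of the constructed $B$ recovers the original $(\alpha,\beta)$ on the nose. The main obstacle will be the careful verification of the orientation-twisted cocycle conditions: when $\varepsilon(g)=-1$ the $G$-action reverses the order of the two arguments of $\beta^l,\beta^r$, so one must check that the sign and order bookkeeping in equations~\eqref{2-cocycle exp 3}--\eqref{2-cocycle exp 4} precisely matches what the axioms for $g(x\dashv y)$ and $g(x\vdash y)$ demand in $B$. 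Once those computations go through on both trees $[2\,1]$ and $[1\,2]$, the rest is a matter of organizing the bookkeeping.
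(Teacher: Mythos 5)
Your overall architecture coincides with the paper's proof: extract a pair $(\alpha,\beta)$ from a $\mathbb{K}$-linear section $s$ of the extension, and conversely rebuild $B=D\oplus D$ with a $\beta$-twisted product and an $\alpha$-twisted $G$-action; you even go beyond the paper by checking that the cohomology class is independent of the choice of section and is constant on equivalence classes of extensions, verifications the paper silently omits.

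However, your two halves are not consistent with each other, and the inconsistency breaks a step you explicitly claim. Your extraction is $\alpha(g,x)=i^{-1}\bigl(gs(g^{-1}x)-s(x)\bigr)$, which is the negative of the paper's (\ref{cent eq1}); by itself this is a harmless convention (it only flips one side of \eqref{2-cocycle exp 1}--\eqref{2-cocycle exp 2}), but it must be carried through consistently. With this convention, the identity \eqref{coho 1} satisfied by $\alpha$ is $\alpha(gh,x)=g\alpha(h,g^{-1}x)+\alpha(g,x)$; equivalently, it is the map $c(g,x):=\alpha(g,gx)=i^{-1}\bigl(gs(x)-s(gx)\bigr)$ that is a crossed homomorphism, $c(gh,x)=g\,c(h,x)+c(g,hx)$. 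Your reconstruction formula $g\cdot(x,d)=(gx,\,gd+\alpha(g,x))$ uses $\alpha(g,x)$ where it must use $c(g,x)=\alpha(g,gx)$: for your formula to define a $G$-action one would need $\alpha(gh,x)=g\alpha(h,x)+\alpha(g,hx)$, which does \emph{not} follow from \eqref{coho 1}, so the constructed $B$ need not be a $G$-module; moreover, with your formula the roundtrip does not recover $\alpha$ ``on the nose'' as you assert --- it returns $\alpha(g,g^{-1}x)$. The fix is one line: set $g\cdot(x,d)=(gx,\,gd+\alpha(g,gx))$, which is exactly the paper's formula $g(x,y)=(gx-\alpha(g,gy),\,gy)$ after translating your sign and coordinate conventions. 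With that correction, and the sign of $\alpha$ aligned so that \eqref{2-cocycle exp 3}--\eqref{2-cocycle exp 4} apply literally in the verification of the orientation axiom for $B$, the rest of your outline matches the paper's argument.
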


 \begin{proof}
Let us start with a singular extension as above. We can think $D$ as a submodule of $B$ via the inclusion map $i(x)=x$. Choose a linear map $s:D\longrightarrow B$ such that $ps=id_D$. We define 
$$\alpha\in  \mathcal{M}aps(G, \text{Hom}(\mathbb{K}[Y_1]\otimes D,D)),~~~\beta \in \text{Hom}(\mathbb{K}[Y_2]\otimes D^{\otimes 2},D)$$
by
 \begin{equation} \label{cent eq1}
\alpha(g,x) = s(x) - {gs}({g^{-1}}{}x)
 \end{equation}
 and
 \begin{equation} \label{cent eq2}
  \beta([2~1] ; x_1,x_2) =  \beta^l(x_1,x_2) = s(x_1 \dashv x_2) - s(x_1) \dashv s(x_2),
 \end{equation}
 \begin{equation}
  \beta([1~2] ; x_1,x_2) =  \beta^r(x_1,x_2) = s(x_1 \vdash x_2) - s(x_1) \vdash s(x_2).
 \end{equation}
We claim that $(\alpha, \beta)\in \tilde{Z}^1_G(D,D)$. Note that $\beta$ is a $2$-cocycle of the dialgebra cohomology defined in \cite{anita gm}. Next, we have
\begin{equation}
\begin{split}
 {g\alpha(h,{g^{-1}}x)} + \alpha(g,x) &= {g(s({g^{-1}}x)} - {ghs({h^{-1}g^{-1}}{}x))} + s(x) - {gs}({g^{-1}}x)\\& = {gs({g^{-1}}x)} - {{gh}s}({h^{-1}g^{-1}}{}x)+ s(x) - {gs}({g^{-1}}x)\\& = s(x)- {{gh}s}({h^{-1}g^{-1}}{}x)
\\&=\alpha(gh,x).
 \end{split}
 \end{equation}
 
To obtain the remaining equations, we have to consider two cases. For $\varepsilon(g) = +1$ from Equation (\ref{cent eq1}), we have
\begin{align*}
 s(x_1\dashv x_2) &= \,gs({g^{-1}}x_1\dashv \,{g^{-1}}x_2) + \alpha(g,x_1\dashv x_2) \\
 &= \, g(s({g^{-1}}x_1)\dashv s({g^{-1}}x_2) + \beta^l({g^{-1}}x_1,{g^{-1}}x_2)) + \alpha(g,x_1\dashv x_2)\\
 &= \,gs({g^{-1}}x_1)\,\dashv gs({g^{-1}}x_2) + \,g\beta^l({g^{-1}}x_1,{g^{-1}}x_2) + \alpha(g,x_1\dashv x_2),
  \end{align*}
 and from Equation (\ref{cent eq2}) we also have
\begin{align*}
 s({x_1}\dashv{x_2}) &= s(x_1)\dashv s(x_2) + \beta^l(x_1,x_2) \\
 &=  ({gs}({g^{-1}}x_1) + i(\alpha(g,x_1))) \dashv ({gs}({g^{-1}}x_2) + i(\alpha(g,x_2))) + \beta^l(x_1,x_2)   \\
  &= {gs}({g^{-1}}x_1)\dashv {gs}({g^{-1}}x_2) + {x_1}\dashv \alpha(g,x_2) + \alpha(g,x_1)\dashv {x_2} + \beta^l(x_1,x_2).
  \end{align*}
  
Comparing these expression we see that
 \begin{equation}
 x_1\dashv \alpha(g,x_2)-\alpha(g,x_1\dashv x_2)+\alpha(g,x_1)\dashv x_2= g\beta^l(\,{g^{-1}}x_1,\,{g^{-1}}x_2) - \beta^l(x_1,x_2).
\end{equation}

Similarly, if $\varepsilon(g) = -1$ and from Equation (\ref{cent eq1}) we have
 \begin{equation*}
\begin{split}
 s(x_1\dashv x_2) &= {{g}s}({g^{-1}}{} x_2\vdash {g^{-1}}{}x_1) +\alpha(g,x_1\dashv x_2) \\&= 
 {g(s({g^{-1}}{}x_2)\vdash s({g^{-1}}{}x_1) + \beta^r({g^{-1}}{}x_2,{g^{-1}}{}x_1))} + \alpha(g,x_1\dashv x_2)\\&= {gs({g^{-1}}{}x_1)}\dashv {gs({g^{-1}}{}x_2)} + {g\beta^r({g^{-1}}{}x_2,{g^{-1}}{}x_1)} + \alpha(g,x_1\dashv x_2).
 \end{split}
 \end{equation*}
  From Equation (\ref{cent eq2}) we also have
 \begin{equation*}
 \begin{split}
 s({x_1}\dashv {x_2}) &= s(x_1)\dashv s(x_2) + \beta^l(x_1,x_2) \\&=  ({gs}({g^{-1}}{}x_1) + \alpha(g,x_1)) \dashv ({gs}({g^{-1}}{}x_2) + \alpha(g,x_2)) + \beta^l(x_1,x_2)  
  \\&= {gs}({g^{-1}}{}x_1)\dashv {gs}({g^{-1}}{}x_2) + {x_1}\dashv \alpha(g,x_2) + \alpha(g,x_1)\dashv {x_2} + \beta^l(x_1,x_2).
 \end{split}
 \end{equation*}
Comparing these expression we see that
 \begin{equation}
{x_1}\dashv \alpha(g,x_2)- \alpha(g,x_1\dashv x_2) + \alpha(g,x_1)\dashv{x_2} = {g\beta^r({g^{-1}}{}x_2,{g^{-1}}{}x_1)} - \beta^l(x_1,x_2) .
 \end{equation}

In a similar way for $y = [1~2]$ and $\varepsilon(g) = + 1$, we have 
 \begin{equation}
{x_1}\vdash \alpha(g,x_2)- \alpha(g,x_1\vdash x_2) + \alpha(g,x_1)\vdash{x_2} = {g\beta^r({g^{-1}}{}x_1,{g^{-1}}{}x_2)}-\beta^r(x_1,x_2),
 \end{equation}
 and for $\varepsilon(g) = - 1$, we have
 \begin{equation}
{x_1}\vdash \alpha(g,x_2)- \alpha(g,x_1\vdash x_2) + \alpha(g,x_1)\vdash{x_2} = {g\beta^l({g^{-1}}{}x_2,{g^{-1}}{}x_1)} - \beta^r(x_1,x_2) ,
 \end{equation}
Hence, we showed that in fact $(\alpha,\beta)\in \tilde{Z}^1_G(D,D)$. One can show that two equivalent extensions give rise to cohomologous $1$-cocycles.

Conversely, starting with $(\alpha,\beta)\in \tilde{Z}^1_G(D,D)$ we define $B=D\oplus D$ with the multiplications and action are given by
\begin{equation*}
(x_1,y_1) \dashv (x_2,y_2) = (x_1 \dashv y_2 + y_1 \dashv x_2 + \beta^l(y_1,y_2), y_1 \dashv y_2),
\end{equation*}
\begin{equation*}
(x_1,y_1) \vdash (x_2,y_2) = (x_1 \vdash y_2 + y_1 \vdash x_2 + \beta^r(y_1,y_2), y_1 \vdash y_2),
\end{equation*}
and 
\begin{equation*}
 {g(x,y)} = ({gx} + \alpha(g,{gy}), {gy}).
\end{equation*}

We claim that $B$ satisfies all properties of oriented dialgebra and defines an extension. Since $\beta$ is a  $2$-cocycle of the dialgebra cohomology, $B$ is clearly a dialgebra. So we only to check the Equations (\ref{orientaion eq1}) and (\ref{orientation eq2}). There are two cases to consider for $\varepsilon(g) = +1$ and $\varepsilon(g) = -1$. Firstly, we deal with the case when $y= [2~1]$ and $\varepsilon(g) = +1$. We have
\begin{equation*}
\begin{split}
{g((x_1,y_1)\dashv(x_2,y_2))} &= {g(x_1\dashv y_2 + y_1\dashv x_2 + \beta^l(y_1,y_2), y_1\dashv y_2)}\\& = ({gx_1}\dashv {gy_2} + {gy_1}\dashv {gx_2} + {g\beta^l(y_1,y_2)} + \alpha(g,{gy_1}\dashv {gy_2}), {{gy_1}\dashv{gy_2}}),
\end{split}
 \end{equation*}
and
\begin{equation*}
  \begin{split}
{g(x_1,y_1)}\dashv {g(x_2,y_2)} &= ({gx_1} + \alpha(g,{gy_1}), {gy_1})\dashv ({gx_2} + \alpha(g,{gy_2}), {gy_2})\\&= ({gx_1}\dashv {gy_2} + \alpha(g,{gy_1})\dashv {gy_2} + {gy_1}\dashv {gx_2} \\&+ {gy_1}\dashv \alpha(g,{gy_2}) + \beta^l({gy_1},{gy_2}),{gy_1}\dashv {gy_2} ).
\end{split}
 \end{equation*}
Therefore, using Equation (\ref{2-cocycle exp 3}) in the last equation, we get
 \[{g((x_1,y_1)\dashv(x_2,y_2))} = {g(x_1,y_1)}\dashv {g(x_2,y_2)}.\]
Next, we deal with the second case when $y= [2~1]$ and $\varepsilon(g) = -1$. We have 
\begin{equation*}
\begin{split}
{g((x_1,y_1)\dashv (x_2,y_2))} &= {g(x_1\dashv y_2 + y_1\dashv x_2 + \beta^l(y_1,y_2), y_1\dashv y_2)}\\& = ({gy_2}\vdash {gx_1} + {gx_2}\vdash {gy_1} + {g\beta^l(y_1,y_2)} + \alpha(g,{gy_2}\vdash {gy_1}), {gy_2}\vdash {gy_1}),
\end{split}
 \end{equation*}
and
\begin{equation*}
\begin{split}
{g(x_2,y_2)}\vdash {g(x_1,y_1)} &= ({gx_2} + \alpha(g,{gy_2}), {gy_2})\vdash ({gx_1} + \alpha(g,{gy_1}), {gy_1})\\&= ({gx_2}\vdash {gy_1} + \alpha(g,{gy_2})\vdash {gy_1} + {gy_2}\vdash {gx_1} \\&+ {gy_2}\vdash \alpha(g,{gy_1}) + \beta^r({gy_2},{gy_1}),{gy_2}\vdash {gy_1} ).
\end{split}
 \end{equation*}
 Therefore, using Equation (\ref{2-cocycle exp 3}) in the last equation it follows that
 \[{g((x_1,y_1)\dashv (x_2,y_2))} = {g(x_2,y_2)}\vdash{g(x_1,y_1)}.\]
 Similar computations for $y = [1~2]$ gives
  \[{g((x_1,y_1)\vdash(x_2,y_2))} = {g(x_1,y_1)}\vdash {g(x_2,y_2)},\]
  \[{g((x_1,y_1)\vdash (x_2,y_2))} = {g(x_2,y_2)}\dashv {g(x_1,y_1)}.\]
  Hence, $B=D\oplus D$ is an oriented dialgebra. Now we define maps $i : D \to  D\oplus D$ and $p: D\oplus D \to D$ as follows:
  $$i(x) = (x, 0),~~~~p(b_1, b_2) = b_2.$$
 It is easy to check that $i$ and $p$ satisfies all the conditions of the definition of singular extensions. 
Thus, one obtains an inverse map from the cohomology to extensions.
 \end{proof}

\section{One-parameter formal deformations of oriented dialgebras}\label{sec 7}
In this final section, we introduce a one-parameter formal deformations of oriented dialgebras. We discuss how deformation is related to the cohomology of oriented dialgebras.
\begin{defn}
Let $(D, \dashv, \vdash)$ be an oriented dialgebra over an oriented group $(G, \varepsilon)$. A one-parameter formal deformation of $D$ is a triple $(\mathfrak{m}^l_t,  \mathfrak{m}^r_t, \Phi_t)$, where 
$$\mathfrak{m}^l_t = \sum^{\infty}_{i=0} m^l_i t^i,~~~\mathfrak{m}^r_t = \sum^{\infty}_{i=0} m^r_i t^i~~~\text{and}~ \Phi_t = \sum^{\infty}_{i=0} \phi_i t^i$$
are formal power series such that
\begin{enumerate}
\item $m^l_i, m^r_i \in \text{Hom}(D \otimes D, D)$ and $\phi_i \in \text{Maps}(G, \text{Hom}(D, D))~~~\text{for all}~ i\geq 0.$
\item $m^l_0(y_1, y_2)=y_1\dashv y_2,~m^r_0(y_1, y_2)=y_1\vdash y_2,~\text{and}~\phi_0(g, x)=gx~\text{for all}~x, y_1, y_2\in D.$ 
\item $(D[[t]],\mathfrak{m}^l_t,\mathfrak{m}^r_t)$ is a dialgebra.
\item $\Phi_t (g_1 g_2, x) = \Phi_t (g_1, \Phi_t(g_2, x))$ for all $g_1, g_2 \in G$ and $x\in D$.
\item $\Phi_t (g, \mathfrak{m}^l_t(y_1, y_2))=\begin{cases}
                                              & \mathfrak{m}^l_t(\Phi_t(g, y_1), \Phi_t(g, y_2))~~~\text{if}~\varepsilon (g)=+1,\\
                                              &  \mathfrak{m}^r_t(\Phi_t(g, y_2), \Phi_t(g, y_1))~~~\text{if}~\varepsilon (g)=-1.
                                             \end{cases}$
\item $\Phi_t (g, \mathfrak{m}^r_t(y_1, y_2))=\begin{cases}
                                              & \mathfrak{m}^r_t(\Phi_t(g, y_1), \Phi_t(g, y_2))~~~\text{if}~\varepsilon (g)=+1,\\
                                              &  \mathfrak{m}^l_t(\Phi_t(g, y_2), \Phi_t(g, y_1))~~~\text{if}~\varepsilon (g)=-1.
                                             \end{cases}$                                             

\end{enumerate}
\end{defn}
For all $n\geq 0$, expanding and equating the coefficients of $t^n$ from both sides of equation (4) in the above definition, we have
$$\phi_n (g_1 g_2, x)= \sum_{i+j=n} \phi_i (g_1, \phi_j (g_2, x)).$$
Similarly for all $n\geq 0$, expanding and equating the coefficients of $t^n$ from both sides of equation (5) in the above definition, we have
\begin{align*}
\sum_{i+j=n} \phi_i (g, m^l_j(y_1, y_2))=\begin{cases}
                                              & \sum_{i+j+k=n}m^l_i(\phi_j (g, y_1), \phi_k (g, y_2))~~~\text{if}~\varepsilon (g)=+1,\\
                                              & \sum_{i+j+k=n} m^r_i (\phi_j (g, y_2), \phi_k (g, y_1))~~~\text{if}~\varepsilon (g)=-1.
                                             \end{cases}
\end{align*}

\begin{align*}
\sum_{i+j=n} \phi_i (g, m^r_j(y_1, y_2))=\begin{cases}
                                              & \sum_{i+j+k=n}m^r_i(\phi_j (g, y_1), \phi_k (g, y_2))~~~\text{if}~\varepsilon (g)=+1,\\
                                              & \sum_{i+j+k=n} m^l_i (\phi_j (g, y_2), \phi_k (g, y_1))~~~\text{if}~\varepsilon (g)=-1.
                                             \end{cases}
\end{align*}
In particular, for $n=1$, we have
\begin{align} \label{df eq1}
\phi_1(g_1 g_2, x) &= \phi_0 (g_1, \phi_1 (g_2, x)) +\phi_1 (g_1, \phi_0 (g_2, x))\\
                              &= g_1 \phi_1 (g_2, x) + \phi_1 (g_1, g_2 x), \nonumber
\end{align}
and 
\begin{align} \label{df eq2}
\phi_0 (g, m^l_1 (y_1, y_2)) + \phi_1 (g, m^l_0 (y_1, y_2))&=\begin{cases}
                                              & m^l_1(\phi_0 (g, y_1), \phi_0 (g, y_2)) + m^l_0(\phi_1(g, y_1), \phi_0 (g, y_2))\\
                                              & + m^l_0(\phi_ 0 (g, y_1), \phi_1 (g, y_2))~~~\text{if}~\varepsilon (g)=+1,\\
                                              & m^r_1 (\phi_0 (g, y_2), \phi_0 (g, y_1))+ m^r_0 (\phi_1 (g, y_2), \phi_0 (g, y_1))\\
                                              & +m^r_0 (\phi_0 (g, y_2), \phi_1 (g, y_1))~~~\text{if}~\varepsilon (g)=-1.   
                                           \end{cases}
\end{align}     
\begin{align}\label{df eq3}
\phi_0 (g, m^r_1 (y_1,y_2)) + \phi_1 (g, m^r_0 (y_1, y_2))&=\begin{cases}
                                              & m^r_1(\phi_0 (g, y_1), \phi_0 (g, y_2)) + m^r_0(\phi_1(g, y_1), \phi_0 (g, y_2))\\
                                              & + m^r_0(\phi_ 0 (g, y_1), \phi_1 (g, y_2))~~~\text{if}~\varepsilon (g)=+1,\\
                                              & m^l_1 (\phi_0 (g, y_2), \phi_0 (g, y_1))+ m^l_0 (\phi_1 (g, y_2), \phi_0 (g, y_1))\\
                                              & +m^l_0 (\phi_0 (g, y_2), \phi_1 (g, y_1))~~~\text{if}~\varepsilon (g)=-1.
                                             \end{cases}
 \end{align}        

We rewrite the Equations in (\ref{df eq2}) and (\ref{df eq3}) as follows:

\begin{align}  \label{df eq4}                                                                               
                                             gm_1^l(y_1, y_2) + \phi_1(g, y_1 \dashv y_2)           &=\begin{cases}
                                              & m^l_1(g y_1, g y_2) + (\phi_1(g, y_1) \dashv g y_2)\\
                                              & + (g y_1 \dashv \phi_1 (g, y_2))~~~\text{if}~\varepsilon (g)=+1,\\
                                              & m^r_1 (g y_2, g y_1)+ (\phi_1 (g, y_2)\vdash g y_1)\\
                                              & + (g y_2\vdash \phi_1 (g, y_1))~~~\text{if}~\varepsilon (g)=-1.
                                             \end{cases}     
\end{align}

 \begin{align}      \label{df eq5}                              
   gm_1^r(y_1, y_2) + \phi_1(g, y_1 \vdash y_2)     &=\begin{cases}
                                              & m^r_1(g y_1, g y_2) + (\phi_1(g, y_1) \vdash g y_2)\\
                                              & + (g y_1 \dashv \phi_1 (g, y_2))~~~\text{if}~\varepsilon (g)=+1,\\
                                              & m^l_1 (g y_2, g y_1)+ (\phi_1 (g, y_2)\dashv g y_1)\\
                                              & + (g y_2\dashv \phi_1 (g, y_1))~~~\text{if}~\varepsilon (g)=-1.
                                             \end{cases}     
\end{align}
\begin{defn}
Let $m_n: \mathbb{K}[Y_2]\otimes (D)^{\otimes 2}\to D$, be defined as,
\begin{align*}
m_n(y;y_1,y_2)=\begin{cases}
                        m_n^{l}(y_1,y_2),\,\,\, \text{if}\,\,\, y=[2\,\,\,1],\\
                        m_n^{r}(y_1,y_2),\,\,\, \text{if}\,\,\, y=[1\,\,\,2],
                          \end{cases}   
\end{align*}
and let $\theta_n : G\to \text{Hom}(\mathbb{K}[Y_1]\otimes D, D) \simeq \text{Hom}(D, D)$ be defined as
$$\theta_n(g, x)= \phi_n(g, g^{-1}x).$$
\end{defn}
For $n=1$, we have a notion of an infinitesimal of the deformation. The pair $(m_1, \theta_1)$ is called the infinitesimal of the deformation of oriented dialgebra.
\begin{thm}\label{infinitesimal}
An infinitesimal of an oriented dialgebra deformation is a $1$-cocycle in $\tilde{H}{^1_G}(D,D)$.
\end{thm}
\begin{proof}
We rewrite the Equations (\ref{df eq1}), (\ref{df eq4}), (\ref{df eq5}) using the Definition of $\theta_1$ and  get the following equations:
\begin{align}\label{df new 1}
\theta_1(g_1 g_2, g_1 g_2 x)= g_1 \theta_1 (g_2, g_2 x) + \theta_1(g_1, g_1 g_2 x).
\end{align}
\begin{align}\label{df new 2}
gm_1^l(y_1, y_2) 
=\begin{cases}
& m^l_1(g y_1, g y_2) + (\theta_1(g, g y_1) \dashv g y_2) + (g y_1 \dashv \theta_1 (g, g y_2)) - \theta_1(g, g y_1 \dashv g y_2)~~~\text{if}~\varepsilon (g)=+1, \\
& m^r_1 (g y_2, g y_1) + (\theta_1 (g, g y_2)\vdash g y_1) + (g y_2\vdash \theta_1 (g, g y_1))- \theta_1(g, g y_2 \vdash g y_1~~~\text{if}~\varepsilon (g)=-1,
\end{cases}     
\end{align}
and
\begin{align}\label{df new 3}
gm_1^r(y_1, y_2)  &=\begin{cases}
                                              & m^r_1(g y_1, g y_2) + (\theta_1(g, g y_1) \vdash g y_2) + (g y_1 \dashv \theta_1 (g, g y_2))- \theta_1(g, g y_1 \vdash g y_2)~~~\text{if}~\varepsilon (g)=+1,\\
                                              & m^l_1 (g y_2, g y_1) + (\theta_1 (g, g y_2)\dashv g y_1) + (g y_2\dashv \theta_1 (g, g y_1)) - \theta_1(g, g y_2 \dashv g y_1)~~~\text{if}~\varepsilon (g)=-1.
                                             \end{cases}     
\end{align}
Comparing Equations (\ref{coho 1}), (\ref{2-cocycle exp 3}), (\ref{2-cocycle exp 4}) to the Equations (\ref{df new 1}), (\ref{df new 2}), (\ref{df new 3}), we can say that $(m_1, \theta_1)$ is  a $1$-cocycle in $\tilde{H}{^1_G}(D,D)$.
\end{proof}
\begin{defn}
Suppose $m^l_i=0,~m^r_i=0$ and $\phi_i=0$ for all $0<i<n$. Then $(m_n, \theta_n)$ is called the $n$th infinitesimal of the deformation.
\end{defn}
Similar to the Theorem \ref{infinitesimal}, we have the following theorem,
\begin{thm}
The $n$th infinitesimal of an oriented dialgebra deformation is a $1$-cocycle in $\tilde{H}{^1_G}(D,D)$.
\end{thm}


Now we discuss the rigidity of the oriented dialgebra deformations.
\begin{defn}
Two oriented dialgebra deformations $(\mathfrak{m}^{l_1}_t, \mathfrak{m}^{r_1}_t, \Phi_t^1)$ and $(\mathfrak{m}^{l_2}_t, \mathfrak{m}^{r_2}_t, \Phi_t^2)$ are said to be equivalent if there $\mathbb{K}[[t]]$-linear automorphism
$$\Psi_t : D[[t]] \to D[[t]]$$
of the form $\Psi_t= \sum^{\infty}_{n=0} \psi_i t^i$ satisfying
\begin{enumerate}
\item $\psi_i \in \text{Hom} (D, D)$ for all $i\geq 0$.
\item $\psi_0= \text{id}$.
\item $\Psi_t (\mathfrak{m}^{l_2}_t(y_1, y_2))= \mathfrak{m}^{l_1}_t(\Psi_t (y_1), \Psi_t (y_2))$ for all $y_1, y_2\in D$.
\item $\Psi_t (\mathfrak{m}^{r_2}_t(y_1, y_2))= \mathfrak{m}^{r_1}_t(\Psi_t (y_1), \Psi_t (y_2))$ for all $y_1, y_2\in D$.
\item $\Psi_t (\Phi_t^2 (g, x))= \Phi_t^1 (g, \Psi_t (x))$ for all $x \in D$ and $g\in G$.
\end{enumerate}
\end{defn}
For all $n\geq 0$, expanding and equating the coefficients of $t^n$ from both sides of equation (3) and (4) in the above definition, we have
\begin{align}\label{equiv 1}
\sum_{i+j=n} \psi_i (m^{l_2}_j (y_1, y_2))= \sum_{i+j+k=n} m^{l_1}_i (\Psi_j (y_1), \Psi_k (y_2)),
\end{align}
\begin{align}\label{equiv 2}
\sum_{i+j=n} \psi_i (m^{r_2}_j (y_1, y_2))= \sum_{i+j+k=n} m^{r_1}_i (\Psi_j (y_1), \Psi_k (y_2)).
\end{align}
Similarly for all $n\geq 0$, expanding and equating the coefficients of $t^n$ from both sides of equation (5) in the above definition, we have
\begin{align}\label{equiv 3}
\sum_{i+j=n} \psi_i (\phi^2_j (g, x))= \sum_{i+j =n}\phi^1_i (g, \psi_j(x))
\end{align}
\begin{defn}
An oriented dialgebra $D$ is called rigid if all the deformations of $D$ are equivalent.
\end{defn}
\begin{prop}
The infinitesimals of two equivalent deformation determines same cohomology class.
\end{prop}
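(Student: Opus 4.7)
The plan is as follows. Let $\Psi_t = \sum_{i\ge 0}\psi_i t^i$ be the $\mathbb{K}[[t]]$-linear automorphism witnessing the equivalence, so that $\psi_0 = \text{id}$, and write $(m_1^{(k)},\theta_1^{(k)})$ for the infinitesimal of the $k$-th deformation ($k=1,2$). The strategy is to extract the coefficient of $t^1$ in each of the five identities defining an equivalence and show that the resulting difference of infinitesimals is the total-complex coboundary of $\psi_1$ viewed as an element of $\text{Hom}(D,D) \simeq \text{Hom}(\mathbb{K}[Y_1]\otimes D,D)$ in $\tilde{C}{^\ast_G}(D,D)$.

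First I would expand identity (\ref{equiv 1}) at $n=1$. Using $\psi_0=\text{id}$ and $m_0^{l_1} = m_0^{l_2} = \dashv$, the only surviving terms are $\psi_1\circ m_0^{l_2}$, $m_0^{l_2}$, $m_1^{l_1}$, and $m_0^{l_1}\circ(\psi_1\otimes\text{id})$, $m_0^{l_1}\circ(\text{id}\otimes\psi_1)$. Rearranging gives
\[m_1^{l_1}(y_1,y_2) - m_1^{l_2}(y_1,y_2) = \psi_1(y_1\dashv y_2) - \psi_1(y_1)\dashv y_2 - y_1\dashv \psi_1(y_2),\]
which is (up to an overall sign) the dialgebra coboundary $(\partial'\psi_1)([2\,1];y_1,y_2)$. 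The same computation applied to (\ref{equiv 2}) yields the analogous formula for the right product, so altogether $m_1^{(1)}-m_1^{(2)}$ is (up to sign) the full dialgebra coboundary $\partial'\psi_1$.

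Next I would unfold identity (\ref{equiv 3}) at $n=1$. Using $\phi_0^i(g,x)=gx$, I obtain $\phi_1^1(g,x) - \phi_1^2(g,x) = \psi_1(gx) - g\psi_1(x)$. Translating this back through the definition $\theta_n(g,x) = -\phi_n(g,g^{-1}x)$ and substituting $y=gx$ converts the relation into
\[\theta_1^{(1)}(g,y) - \theta_1^{(2)}(g,y) = g\psi_1(g^{-1}y) - \psi_1(y),\]
which is precisely the group-cohomology coboundary $(\partial''\psi_1)(g,y)$ of $\psi_1$.

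Combining the two computations, the pair $(\theta_1^{(1)} - \theta_1^{(2)},\, m_1^{(1)} - m_1^{(2)})$ equals the image of $\psi_1$ under the total differential of the double complex $\tilde{C}{^\ast_G}(D,D)$, so it lies in $\tilde{B}^1_G(D,D)$. Hence the two infinitesimals determine the same class in $\tilde{H}^1_G(D,D)$. The main technical obstacle I expect is the careful bookkeeping of signs: one must verify that the signs produced by these two coefficient extractions are compatible with the specific sign convention chosen for the total differential $d = \partial' \pm \partial''$ (and absorb any mismatch by replacing $\psi_1$ with $-\psi_1$ if required). A secondary nuisance is that the $\phi$-side of the equivalence does not explicitly split into cases according to $\varepsilon(g)$, so one must double-check that the transformation $\phi\leftrightarrow\theta$ correctly reproduces the twisted coboundary (\ref{2-cocycle exp 3})--(\ref{2-cocycle exp 4}) for $\varepsilon(g)=-1$; the symmetry of the equivalence condition (\ref{equiv 3}) in $g$ handles this in parallel with the $\varepsilon(g)=+1$ case.
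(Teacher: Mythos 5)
Your proposal is correct and is essentially the paper's own proof: both extract the coefficient of $t$ from the equivalence identities (\ref{equiv 1}), (\ref{equiv 2}), (\ref{equiv 3}), use $\psi_0=\mathrm{id}$, $m_0^{l}=\dashv$, $m_0^{r}=\vdash$, $\phi_0(g,x)=gx$ to identify the difference of the $m_1$'s with $\pm\partial'\psi_1$ and (via $\theta_1(g,x)=-\phi_1(g,g^{-1}x)$) the difference of the $\theta_1$'s with $\mp\partial''\psi_1$, and conclude that the pair of differences is a coboundary in the total complex (the paper merely subtracts in the opposite order, which flips the overall sign). The relative sign between the $\partial'$- and $\partial''$-components that you flag is present in the paper's computation too (it gets $m_1^{l_2}-m_1^{l_1}=\partial'\psi_1$ but $\theta^2_1-\theta^1_1=-\partial''\psi_1$), so your caveat about fixing the total-differential convention $d=\partial'\pm\partial''$ reflects a sign-convention looseness of the paper itself rather than a gap in your argument.
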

\begin{proof}
Let  $(\mathfrak{m}^{l_1}_t, \mathfrak{m}^{r_1}_t, \Phi_t^1)$ and $(\mathfrak{m}^{l_2}_t, \mathfrak{m}^{r_2}_t, \Phi_t^2)$ are two equivalent deformations.
For $n=1$, Equations (\ref{equiv 1}), (\ref{equiv 2}), (\ref{equiv 3}) gives the following equations:
\begin{align*}
&\psi_0(m_1^{l_2}(y_1, y_2)) + \psi_1(m_0^{l_2}(y_1, y_2))= m_0^{l_1}(\psi_0(y_1), \psi_1(y_2)) + m_0^{l_1}(\psi_1(y_1), \psi_0(y_2)) + m_1^{l_1}(\psi_0(y_1), \psi_0(y_2)),\\
&\psi_0(m_1^{r_2}(y_1, y_2)) + \psi_1(m_0^{r_2}(y_1, y_2))= m_0^{r_1}(\psi_0(y_1), \psi_1(y_2)) + m_0^{r_1}(\psi_1(y_1), \psi_0(y_2)) + m_1^{r_1}(\psi_0(y_1), \psi_0(y_2)),\\
&\psi_0(\phi^2_1(g,x)) + \psi_1(\phi^2_0(g,x))= \phi^1_0(g, \psi_1(x)) + \phi^1_1(g, \psi_0(x)).
\end{align*}
 Rewriting the above equations, we have
\begin{align*}
&m_1^{l_2}(y_1, y_2) - m_1^{l_1}(y_1, y_2) = y_1 \dashv \psi_1(y_2) - \psi_1 (y_1 \dashv y_2) + \psi_1(y_1)\dashv y_2 = \partial^{'} \psi_1([2~1]; y_1, y_2),\\
&m_1^{r_2}(y_1, y_2) - m_1^{r_1}(y_1, y_2) = y_1 \vdash \psi_1(y_2) - \psi_1 (y_1 \vdash y_2) + \psi_1(y_1)\vdash y_2 = \partial^{'} \psi_1([1~2]; y_1, y_2),\\
& \theta^2_1 (g, gx) - \theta^1_1 (g, gx) = (g\psi_1(x) - \psi_1(gx)) = \partial^{''}\psi_1 (g, gx).
\end{align*}
Thus, infinitesimals of two equivalent deformations are cohomologous.
\end{proof}

\begin{remark}
In this work we have studied oriented dialgebras from the cohomological point of view. However, this approach is applicable to various
Loday-type algebras and to their twisted analogues. In our future project, we plan to explicitly describe in a general fashion the cohomology of oriented Loday-type algebras with coefficients in a representation.
\end{remark}

\mbox{ }\\

\providecommand{\bysame}{\leavevmode\hbox to3em{\hrulefill}\thinspace}
\providecommand{\MR}{\relax\ifhmode\unskip\space\fi MR }
\providecommand{\MRhref}[2]{%
  \href{http://www.ams.org/mathscinet-getitem?mr=#1}{#2}
}
\providecommand{\href}[2]{#2}

\mbox{ } \\
\end{document}